\numberwithin{equation}{section}
\newtheorem{Theorem}{Theorem}[section]
\newtheorem{Lemma}[Theorem]{Lemma}
\newtheorem{Proposition}[Theorem]{Proposition}
\newtheorem{Fact}[Theorem]{Fact}
 { \theoremstyle{definition}
\newtheorem{Definition}[Theorem]{Definition}

\newtheorem{Remark}[Theorem]{Remark}
\newtheorem{Setting}[Theorem]{Setting}
\newtheorem{Notation}[Theorem]{Notation}
}
\begin{document}
%\allowdisplaybreaks

\newcommand{\arXivNumber}{2005.03308}

\renewcommand{\PaperNumber}{042}

\FirstPageHeading

\ShortArticleName{Linear Independence of Generalized Poincar\'e Series for Anti-de Sitter 3-Manifolds}

\ArticleName{Linear Independence of Generalized Poincar\'e Series\\ for Anti-de Sitter 3-Manifolds}

\Author{Kazuki KANNAKA}

\AuthorNameForHeading{K.~Kannaka}

\Address{RIKEN iTHEMS, Wako, Saitama 351-0198, Japan}
\Email{\href{mailto:kazuki.kannaka@riken.jp}{kazuki.kannaka@riken.jp}}

\ArticleDates{Received May 13, 2020, in final form April 13, 2021; Published online April 23, 2021}

\Abstract{Let $\Gamma$ be a discrete group acting properly discontinuously and isometrically on~the three-dimensional anti-de Sitter space $\mathrm{AdS}^{3}$, and $\square$ the Laplacian which is a second-order hyperbolic differential operator. We~study linear independence of a family of gene\-ra\-li\-zed Poincar\'e series introduced by Kassel--Kobayashi [\textit{Adv. Math.} \textbf{287} (2016), 123--236, arXiv:1209.4075], which are defined by~the $\Gamma$-average of certain eigenfunctions on $\mathrm{AdS}^{3}$. We~prove that the multi\-pli\-ci\-ties of~$L^{2}$-eigenvalues of the hyperbolic Laplacian $\square$ on $\Gamma\backslash\mathrm{AdS}^{3}$ are unbounded when $\Gamma$ is finitely generated. Moreover, we prove that the multiplicities of \textit{stable $L^{2}$-eigenvalues} for compact anti-de Sitter $3$-manifolds are unbounded.}

\Keywords{anti-de Sitter $3$-manifold; Laplacian; stable $L^2$-eigenvalue}

\Classification{58J50; 53C50; 22E40}

\section{Introduction}
A pseudo-Riemannian manifold is a smooth manifold $M$ equipped with
a smooth non-degenerate symmetric bilinear tensor $g$
 of signature $(p,q)$ on $M$.
It is called Riemannian if $q=0$, and Lorentzian if $q=1$.
As in the Riemannian case,
the Laplacian $\square_{M}:=\operatorname{div}_{M} \circ \operatorname{grad}_{M}$
is defined as a second-order differential operator on $M$. We~note that it is a
hyperbolic differential operator if $M$ is Lorentzian. We~write $L^2(M)$ for the Hilbert space of square-integrable functions on $M$
with respect to the Radon measure induced by the pseudo-Riemannian structure.
For $\lambda \in \mathbb{C}$, we denote by
\begin{gather*}
L^2_{\lambda}(M) :=
\big\{f \in L^2(M) \mid \square_{M}f = \lambda f \text{ in the weak sense}\big\}.
\end{gather*}
The set of $L^2$-eigenvalues
$\mathrm{Spec}_{d}(\square_{M}) := \big\{ \lambda \in \mathbb{C}
\mid L^2_{\lambda}(M)\neq0\big\}$
is called the \textit{discrete spectrum} of~$\square_{M}$.

Our interest is the multiplicities of $L^2$-eigenvalues $\lambda$ of $\square_{M}$,
denoted by
\begin{gather*}
\mathcal{N}_{M}(\lambda):= \operatorname{dim}_{\mathbb{C}}L^2_{\lambda}(M)
\in \mathbb{N}\cup\{\infty\}.
\end{gather*}
In the Riemannian case, the Laplacian is an elliptic differential operator
and the distribution of~its discrete spectrum has been investigated extensively,
such as the Weyl law for compact Riemannian manifolds.
However, it is not the case for non-Riemannian manifolds.
Kobayashi~\cite{Kob16}, and later Fox--Strichartz~\cite{FoSt18},
investigated the distribution of the discrete spectrum of
the Laplacian $\square_{M}$ of some pseudo-Riemannian manifolds, i.e., when
$M$ is the flat pseudo-Riemannian manifold $\mathbb{R}^{p,q}/\mathbb{Z}^{p+q}$
and is the Lorentzian manifold $S^1\times S^{q}$, respectively.

Let us recall some basic notions.
A \textit{discontinuous group} for a homogeneous manifold \mbox{$X\!=\!G/H$}
is a discrete subgroup $\Gamma$ of $G$ acting properly discontinuously
and freely on $X$ (Kobayashi \cite[Definition~1.3]{Ko01}). In~this case, the quotient space $X_{\Gamma}:=\Gamma\backslash X$ carries a
$C^{\infty}$-manifold structure such that
the quotient map $p_{\Gamma}\colon X\rightarrow X_{\Gamma}$ is a covering of
$C^{\infty}$ class, hence $X_{\Gamma}$ has a $(G,X)$-structure induced by $p_{\Gamma}$.
If we drop the assumption of freeness, $X_{\Gamma}$ is not always a manifold but
carries a nice structure called an orbifold or $V$-manifold.
Proper discontinuity is a more
serious assumption which assures $X_{\Gamma}$ to be Hausdorff in the quotient topology. We~remark that the action of a discrete subgroup $\Gamma$ on $X$
may fail to be properly discontinuous
when $H$ is noncompact. In~order to overcome this difficulty,
Kobayashi~\cite{Kob96} and Benoist~\cite{Ben96} established the properness criterion
for reductive $G$ generalizing the original criterion by Kobayashi~\cite{Kob89}.
Whereas discontinuous groups for the de Sitter space
$\mathrm{dS}^{n}:=\mathrm{SO}_{0}(n,1)/\mathrm{SO}_{0}(n-1,1)$
are always finite groups
(the Calabi--Markus phenomenon, see~\cite{CaMa62, Kob89}),
there are a rich family of discontinuous groups for
the anti-de Sitter space, see, e.g.,~\cite{Go85, Kob98, Sa00}. We~treat, in this article, the three-dimensional anti-de Sitter space
$\mathrm{AdS}^{3}:=\mathrm{SO}_{0}(2,2)/(\{\pm1\}\times\mathrm{SO}_{0}(2,1))$.

For $m\in \mathbb{N}$, we set
\begin{gather*}%\label{lambda}
\lambda_{m} := 4m(m-1).
\end{gather*}
We prove:
\begin{Theorem}
\label{mthm1}
For any finitely generated discontinuous group $\Gamma$ for
$\mathrm{AdS}^{3}$,
\begin{gather*}\lim_{m \to \infty}\mathcal{N}_{\Gamma\backslash\operatorname{AdS}^{3}}
(\lambda_{m}) = \infty.
\end{gather*}
\end{Theorem}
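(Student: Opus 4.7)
The plan is to work in the group-manifold model $\mathrm{AdS}^{3}\simeq\mathrm{SL}(2,\mathbb{R})$ equipped with its bi-invariant Lorentzian metric, under which $G^{0}\simeq\bigl(\mathrm{SL}(2,\mathbb{R})\times\mathrm{SL}(2,\mathbb{R})\bigr)/\{\pm(I,I)\}$ acts by $x\mapsto g x h^{-1}$ and the Laplacian $\square$ is (up to sign) the bi-invariant Casimir. In this model a natural family of $\square$-eigenfunctions with eigenvalue $\lambda_{m}=4m(m-1)$ is produced by the matrix coefficients
\[
f_{v,w}(g):=\langle \pi_{2m}(g)v,w\rangle,\qquad v,w\in\mathcal{H}^{K\text{-fin}}_{2m},
\]
of the holomorphic discrete series $\pi_{2m}$ of~$\mathrm{SL}(2,\mathbb{R})$, whose infinitesimal character sends the Casimir precisely to~$\lambda_{m}$. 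The Kassel--Kobayashi generalized Poincar\'e series
\[
\Phi_{\Gamma}(f_{v,w})(x):=\sum_{\gamma\in\Gamma}f_{v,w}\bigl(\gamma^{-1}\cdot x\bigr),
\]
when convergent, is a $\Gamma$-invariant $\square$-eigenfunction that descends to an element of~$L^{2}_{\lambda_{m}}(\Gamma\backslash\mathrm{AdS}^{3})$.

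The first step is to secure absolute convergence. Bi-$K$-finite matrix coefficients of $\pi_{2m}$ decay at rate $e^{-2m|t|}$ along a Cartan one-parameter subgroup, hence lie in~$L^{p}$ for $p$ arbitrarily close to~$1$ once $m$ is large. Because $\Gamma$ is finitely generated, the structure theory of $\mathrm{AdS}^{3}$-discontinuous groups (Kulkarni--Raymond, Kobayashi, Gu\'eritaud--Kassel) bounds the orbit-counting function of~$\Gamma$ in $\mathrm{AdS}^{3}$-balls by a fixed exponential. For $m$ large the matrix-coefficient decay dominates this orbit growth, and one obtains a linear map
\[
\mathcal{F}_{m}\colon V_{m}\longrightarrow L^{2}_{\lambda_{m}}(\Gamma\backslash\mathrm{AdS}^{3}),\qquad (v,w)\longmapsto\Phi_{\Gamma}(f_{v,w}),
\]
defined on an arbitrarily large finite-dimensional subspace~$V_{m}$ of $K$-finite parameters. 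It suffices to show $\operatorname{rank}\mathcal{F}_{m}\to\infty$.

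The core is a point-sampling argument. Fix a compact set $\Omega\subset\mathrm{AdS}^{3}$ injecting to $\Gamma\backslash\mathrm{AdS}^{3}$, pick sample points $x_{1},\dots,x_{N}\in\Omega$, and split
\[
\Phi_{\Gamma}(f_{v,w})(x_{i})\ =\ f_{v,w}(x_{i})\ +\ \sum_{\gamma\neq e}f_{v,w}\bigl(\gamma^{-1}\cdot x_{i}\bigr).
\]
A standard non-degeneracy statement for $K$-finite matrix coefficients produces, for $m$ sufficiently large relative to~$N$, a subspace $V_{m}$ of dimension~$\geq N$ on which the head-evaluation $(v,w)\mapsto(f_{v,w}(x_{i}))_{i=1}^{N}$ is injective; the tail is uniformly negligible on~$V_{m}$ by the decay-versus-growth comparison of the previous step. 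Adding the two contributions preserves injectivity, giving $\operatorname{rank}\mathcal{F}_{m}\geq N$, hence $\mathcal{N}_{\Gamma\backslash\mathrm{AdS}^{3}}(\lambda_{m})\geq N$; letting $N\to\infty$ yields the theorem.

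The principal obstacle is the \emph{uniformity} of the tail estimate across the growing subspace~$V_{m}$. Pointwise decay for a single matrix coefficient is classical, but controlling an entire $N$-dimensional subspace requires operator-norm estimates on $\pi_{2m}(\gamma^{-1})$ along the $\Gamma$-orbit, combined with the exponential orbit-counting bound supplied by finite generation of~$\Gamma$. Finite generation enters exactly here: without a uniform exponential bound on orbit growth, the tail need not be dominated by the head no matter how large~$m$ is taken.
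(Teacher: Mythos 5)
Your overall strategy---averaging $K$-finite discrete-series matrix coefficients over $\Gamma$, invoking the exponential orbit-counting bound that finite generation supplies (via sharpness), and comparing the identity term against the tail---is the same as the paper's. But the two steps you dismiss as routine are exactly where the paper has to work, and each hides a genuine gap. The first concerns the geometry: the head-versus-tail comparison is governed by the pseudo-distance $\|x\|$ defined by $\cosh\|x\|=x_1^2+\cdots+x_4^2$ (the Cartan projection), not by any ordinary distance, since $|\psi_{m,k}(y)|=\cosh^{-2m}(\|y\|/2)\tanh^{k}(\|y\|/2)$ and the locus $\{\|y\|=0\}$ is an entire circle on which $|\psi_{m,0}|\equiv 1$. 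Choosing a compact set $\Omega$ that injects into $\Gamma\backslash\mathrm{AdS}^{3}$ in the ordinary topology does not prevent some $\gamma^{-1}x_i$ with $\gamma\neq e$ from having \emph{smaller} pseudo-distance to the origin than $x_i$ itself; if that happens, that single tail term dominates the head for large $m$ and your comparison fails. The paper must first prove (Proposition~\ref{strong}, via the Baire-category argument of Lemma~\ref{weak}) that after conjugating $\Gamma$ one has $\varepsilon_{\Gamma}>0$, so that Lemma~\ref{inj-rad} yields the separation $\|\gamma^{-1}x\|>4\varepsilon$ on which the whole tail estimate rests. Your proposal never addresses this.

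The second gap is the ``standard non-degeneracy statement.'' Injectivity of the head-evaluation map $H\colon V_m\to\mathbb{C}^N$ together with smallness of the tail map $T$ gives injectivity of $H+T$ only when $\|T\|$ is below the least singular value of $H$; both quantities decay exponentially in $m$ at rates set by pseudo-distances, so producing that quantitative gap uniformly over an $N$-dimensional $V_m$ is the actual heart of the proof, not a citation. The paper resolves it by a different device: it fixes the functions $\psi_{m,3^j}$ (with $K$-types in \emph{geometric} progression) and, for each nonzero coefficient vector $b$ with sign pattern $a$, constructs a single evaluation point $x_{a,\varepsilon}$ (Lemma~\ref{sekoi}) at which all terms of $\operatorname{Re}\psi_{m,b}$ have the same sign, so the head equals $(\cosh\varepsilon)^{-2m}f_b(\tanh\varepsilon)$ for a polynomial $f_b$ with nonnegative coefficients, and that same $f_b$ bounds the tail (Lemma~\ref{koukou}). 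The remark following Lemma~\ref{sekoi} notes that an arithmetic progression of $K$-types would not admit such a point, which shows this non-degeneracy is delicate and cannot be taken for granted. To repair your argument you would need either this sign-pattern construction or an explicit lower bound on the singular values of your evaluation matrix that provably beats the tail for all of $V_m$.
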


\begin{Remark}\quad
\begin{enumerate} \itemsep=0pt
\item[(1)]
A discontinuous group $\Gamma$ for $\mathrm{AdS}^{3}$
is called standard~\cite[Definition~1.4]{KaKob16} if
it is contained in a reductive subgroup of $\mathrm{SO}_{0}(2,2)$
which acts properly on $\mathrm{AdS}^{3}$ such as $\mathrm{SU}(1,1)$.
When~$\Gamma$ is torsion-free and standard,
Kassel--Kobayashi~\cite{KaKob19,KaKob20PJA} established the theory of spectral decomposition
of $L^{2}(\Gamma\backslash\mathrm{AdS}^{3})$ into eigenfunctions of the (hyperbolic) Laplacian.
Moreover, a stronger result than Theorem~\ref{mthm1} holds in this case:
$\mathcal{N}_{\Gamma\backslash\mathrm{AdS}^{3}}(\lambda_{m}) = \infty$
for sufficiently large $m \in \mathbb{N}$ (Kassel--Kobayashi~\cite{AnaKaKob}).
On the other hand, a full spectral decomposition is not known.
The construction of $L^{2}$-eigenfunctions by generalized Poincar\'e series still works for
the non-standard case, showing that $\lambda_{m}$ is an $L^{2}$-eigenvalue on
$\Gamma\backslash\mathrm{AdS}^{3}$ for sufficiently large $m\in\mathbb{N}$~\cite{KaKob16}.
Theorem~\ref{mthm1} is also applicable to non-standard $\Gamma$,
for example, in the case where $\Gamma$ is Zariski dense in $\mathrm{SO}(2,2)$.

\item[(2)]
The assumption that $\Gamma$ is finitely generated could be relaxed. In~fact, the exponential growth condition (see (\ref{C})) for
$\Gamma$-orbits is essential in the proof of Theorem~\ref{mthm1},
and there exist infinitely generated discontinuous groups $\Gamma$ satisfying (\ref{C})
and the conclusion of~Theorem~\ref{mthm1} holds for such $\Gamma$
(see Theorem~\ref{unbound} which is proved without finitely generated assumption).

\item[(3)]
An analogous statement to Theorem~\ref{mthm1} also holds when
$\Gamma\backslash\mathrm{AdS}^{3}$ is an orbifold.
See~Section~\ref{preliminary-Poincare} for the argument when
we drop the assumption that the $\Gamma$-action is free.
\end{enumerate}
\end{Remark}

Now we consider a small deformation of a discrete subgroup.
The study of \textit{stability} for~pro\-pe\-rness
was intiated by Kobayashi~\cite{Kob98} and Kobayashi--Nasrin~\cite{KoNa06}
and has been developed
by~Kassel~\cite{Ka12} and others.
Moreover, Kassel--Kobayashi~\cite{KaKob16} proved
the existence of infinite \textit{stable $L^2$-eigenvalues}
under any small deformation of discontinuous groups. In~this article, we~also consider
the multiplicities of stable $L^2$-eigenvalues (Definition~\ref{stab-mult})
and prove that they are unbounded.

To be precise, let $X_{n}$ be the $n$-fold covering of $X_{1}:=\mathrm{AdS}^{3}$
for $1\leq n \leq \infty$,
and $G_{n}$ the Lie group of its isometries.
Every compact anti-de Sitter $3$-manifold $M$ is of the form
$M\cong\Gamma\backslash X_{n}$ for some \textit{finite} $n$,
where $\Gamma (\subset G_{n})$ is a discontinuous group for $X_{n}$
by Kulkarni--Raymond \cite[Theorem~7.2]{KuRa85} and Klingler~\cite{Kl96}. We~take $n$ to be the smallest integer of this property.

Let $\mathrm{Hom}(\Gamma,G_{n})$ be the set of
group homomorphisms with compact-open topology,
and $\mathcal{U}_{\Gamma}$ the set of neighborhoods $W$ in
$\mathrm{Hom}(\Gamma,G_{n})$
of the natural inclusion $\Gamma \subset G_{n}$
such that for any~$\varphi \in W$, the map $\varphi$ is injective and
$\varphi(\Gamma)$ acts properly discontinuously on $X_{n}$.
One knows $\mathcal{U}_{\Gamma}\neq\varnothing$~\cite{Kl96,Kob98}.
By definition, $\lambda$ is a stable $L^2$-eigenvalue if
$\min_{\varphi \in W} \mathcal{N}_{\varphi(\Gamma)\backslash X_{n}}(\lambda)\neq0$
for some $W \in \mathcal{U}_{\Gamma}$.
Moreover, for any $\lambda \in \mathbb{C}$ and
any inclusion $W'\subset W$ in $\mathcal{U}_{\Gamma}$,
we have an obvious inequality
\begin{gather*}
\min_{\varphi \in W'}
\mathcal{N}_{\varphi(\Gamma)\backslash X_{n}}(\lambda)
\geq
\min_{\varphi \in W}
\mathcal{N}_{\varphi(\Gamma)\backslash X_{n}}(\lambda).
\end{gather*}

\begin{Definition}
\label{stab-mult}
For a compact anti-de Sitter $3$-manifold $M$, we say that
\begin{gather*}
\widetilde{\mathcal{N}}_{M}(\lambda):=
\sup_{W\in\mathcal{U}_{\Gamma}}\min_{\varphi \in W}
\mathcal{N}_{\varphi(\Gamma)\backslash X_{n}}(\lambda)
\end{gather*}
is the multiplicity of a stable $L^2$-eigenvalue $\lambda$.
\end{Definition}

There exist infinitely many $m\in \mathbb{N}$
such that $\widetilde{\mathcal{N}}_{M}(\lambda_{m})\geq1$,
namely $\lambda_{m}$ is a stable $L^2$-eigen\-value
for sufficiently large $m$ \cite[Corollary~9.10]{KaKob16}.
However, to the best knowledge of the author,
it is not known whether
$\widetilde{\mathcal{N}}_{M}(\lambda_{m})$
is finite. We~prove:
\begin{Theorem}\label{mthm2}
For any compact anti-de Sitter $3$-manifold $M$,
\begin{gather*}
\lim_{m \to \infty}
\widetilde{\mathcal{N}}_{M}(\lambda_{m})=\infty.
\end{gather*}
\end{Theorem}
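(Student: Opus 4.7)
\textbf{Proof proposal for Theorem~\ref{mthm2}.}
The plan is to upgrade Theorem~\ref{mthm1} (applied to $\Gamma$ itself) to a statement that is uniform over a small neighborhood $W\in\mathcal{U}_{\Gamma}$ of the inclusion. Since $M$ is compact, $\Gamma$ is cocompact in $G_{n}$ and hence finitely generated, so Theorem~\ref{mthm1} is directly applicable. For each $N\in\mathbb{N}$, the goal is to produce $m_{0}$ such that for every $m\geq m_{0}$ there exists $W\in\mathcal{U}_{\Gamma}$ with $\mathcal{N}_{\varphi(\Gamma)\backslash X_{n}}(\lambda_{m})\geq N$ for all $\varphi\in W$; taking a sup over $W$ will then give $\widetilde{\mathcal{N}}_{M}(\lambda_{m})\geq N$, which is the desired conclusion.

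The first key step is to fix a neighborhood $W_{0}\in\mathcal{U}_{\Gamma}$ on which the exponential growth condition~(\ref{C}) holds \emph{uniformly in $\varphi$}: one wants constants $C,\delta_{0}>0$ such that
\[
\#\bigl\{\gamma\in\Gamma : d\bigl(x_{0},\varphi(\gamma)x_{0}\bigr)\leq R\bigr\}\leq C e^{\delta_{0}R}
\]
for all $\varphi\in W_{0}$ and all $R\geq 0$. For cocompact $\Gamma$ this should follow from the stability of proper cocompact actions due to Kobayashi--Nasrin and Kassel, combined with a local upper bound on the growth exponent under small deformations (the deformation space of a cocompact $\Gamma$ being parametrized by a finite-dimensional moduli and the orbit-counting being locally controlled on it).

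The second step revisits the generalized Poincar\'e series construction of Kassel--Kobayashi~\cite{KaKob16}. For sufficiently large $m$ and each seed eigenfunction $f_{\xi}^{(m)}$ on $X_{n}$ whose exponential decay rate $s_{m}$ satisfies $s_{m}>\delta_{0}$, define
\[
F_{i}^{\varphi}(x):=\sum_{\gamma\in\Gamma} f_{\xi_{i}}^{(m)}\bigl(\varphi(\gamma)^{-1}x\bigr).
\]
The uniform orbit-count bound yields absolute convergence uniform in $x$ on compact sets and uniform in $\varphi\in W_{0}$, producing an element of $L^{2}_{\lambda_{m}}\bigl(\varphi(\Gamma)\backslash X_{n}\bigr)$ that depends continuously on $\varphi$. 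Apply Theorem~\ref{mthm1} to $\Gamma$ to choose, for $m\geq m_{0}(N)$, seeds $\xi_{1},\dots,\xi_{N}$ and points $x_{1},\dots,x_{N}\in X_{n}$ for which the matrix $\bigl(F_{i}^{\mathrm{id}}(x_{j})\bigr)_{i,j=1}^{N}$ is invertible. Since the matrix entries are continuous in $\varphi$, invertibility persists on some smaller neighborhood $W\subset W_{0}$, so $F_{1}^{\varphi},\dots,F_{N}^{\varphi}$ remain linearly independent in $L^{2}_{\lambda_{m}}\bigl(\varphi(\Gamma)\backslash X_{n}\bigr)$ for every $\varphi\in W$.

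The main obstacle is establishing the $\varphi$-uniform version of condition~(\ref{C}) in the first step: condition~(\ref{C}) for each individual $\varphi$ is available from the proof of Theorem~\ref{mthm1}, but here one needs a \emph{locally uniform} upper bound on the growth exponent, together with the observation that the decay rate $s_{m}$ of the seed eigenfunctions tends to infinity with $m$ so that the strict inequality $s_{m}>\delta_{0}$ can be arranged for all large $m$. Once this quantitative control is in place, the third step reduces to the standard fact that invertibility of a matrix is an open condition, and the selection of points $x_{j}$ witnessing linear independence can be inherited directly from the construction underlying Theorem~\ref{mthm1}.
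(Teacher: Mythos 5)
Your overall strategy is recognizably the same as the paper's at the level of the first step (a $\varphi$-uniform exponential growth bound on a neighborhood of the inclusion), but you transfer linear independence to the deformed groups differently. The paper does not perturb an evaluation matrix: it re-runs the linear independence argument (Proposition~\ref{mainthm}) for each $\varphi(\Gamma)$ separately, which requires uniform control not only of the growth constants $(A,a)$ but also of the ``injectivity radius'' constant $\varepsilon_{\varphi(\Gamma)}$ of \eqref{varepsilon}, since the threshold $m_{\Gamma}(k)$ in \eqref{Poincare_estimate} blows up as $\varepsilon_{\Gamma}\to0$. Your route --- apply the linear independence only at $\varphi=\mathrm{id}$, pick points $x_{1},\dots,x_{N}$ with $\det\bigl(F_{i}^{\mathrm{id}}(x_{j})\bigr)\neq0$ (legitimate, since the Poincar\'e series are continuous by Fact~\ref{Poincare}), and propagate by openness of invertibility --- would, if completed, avoid the uniform $\varepsilon$ bound, at the cost of having to prove that $\varphi\mapsto F_{i}^{\varphi}(x_{j})$ is continuous. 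That continuity does follow by dominated convergence once the uniform growth bound is in hand, so this part of your plan is sound and is arguably a cleaner transfer mechanism.

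The genuine gap is the first step, which you assert ``should follow'' from stability of properness but do not prove; this is precisely where the substance of the paper's Section~\ref{proof-thm2-section} lies. Stability of proper discontinuity (Kobayashi, Klingler, Kassel) gives you that $\varphi(\Gamma)$ still acts properly discontinuously for $\varphi$ near the inclusion, but it does not by itself give a locally uniform exponent $\delta_{0}$ in the orbit-counting function $N_{\varphi(\Gamma)}(x,R)$. The paper obtains this quantitatively by splitting into the two structural cases of Fact~\ref{explicit}: for type $(i)$ groups $\Delta^{j,\rho}$ it uses the Gu\'eritaud--Kassel Lipschitz constant $C_{\rm Lip}(j,\rho)$, its continuity in $(j,\rho)$, a Kazhdan--Margulis normalization (Fact~\ref{Kazhdan-Margulis}), and the sharpness-to-counting estimate $N_{\Gamma}(x,R)\leq \#(\Gamma\cap K)\,c\,{\rm e}^{8R(1-\alpha)^{-1}}$ of Fact~\ref{alphacounting}, yielding the explicit threshold $m>3^{k}\mu_{1}(1-C_{\rm Lip}(j,\rho))^{-2}$ of Theorem~\ref{non-standard}; for type $(ii)$ it invokes Kassel's deformation theorem (Fact~\ref{deform}) and the ``class $n$'' analysis of Lemma~\ref{n-standard}. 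None of this is routine, and your appeal to ``a finite-dimensional moduli with locally controlled orbit-counting'' is not an argument. Two further points you should address: the deformation takes place in $G_{n}$ acting on the $n$-fold cover $X_{n}$, whereas the linear independence machinery lives on $\mathrm{AdS}^{3}$, so one must pass to the image of $\varphi(\Gamma)$ in $G$ (which may have torsion, whence the orbifold and class-$n$ discussion in the paper); and the eigenfunctions used are the specific spherical functions $\psi_{m,3^{j}}$, whose decay rate indeed grows linearly in $m$, so your requirement $s_{m}>\delta_{0}$ is satisfiable --- but that should be said explicitly rather than left as an ``observation.''
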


The organization of this article is as follows.
A key step to our proof is to find a family of $L^2$-eigenfunctions
of $\square_{\mathrm{AdS}^{3}}$ with eigenvalue $\lambda_{m}$ on $\mathrm{AdS}^{3}$
for which the corresponding
``generalized Poincar\'e series'' are linearly independent, see Proposition~\ref{mainthm}. In~Section~\ref{Preliminary-AdS},
we recall some facts about
$L^2$-eigenfunctions of $\square_{\mathrm{AdS}^{3}}$
and their generalized Poincar\'e series which were
introduced in~\cite{KaKob16} as the $\Gamma$-average of these eigenfunctions. We~then give a uniform estimate of the ``pseudo-distance''
between the origin and the second closest point of each $\Gamma$-orbit
(see Section~\ref{inj-radii}). In~Section~\ref{proof-linear-indep}, we complete a proof of Proposition~\ref{mainthm}. In~Section~\ref{proof-thm2-section}, we prove a generalization of~Theorem~\ref{mthm2}
to the case of convex cocompact groups (Definition~\ref{convex_cocompact}).

\section{Preliminaries about the anti-de Sitter space}
\label{Preliminary-AdS}

In this section, we collect some preliminary results about $\mathrm{AdS}^{3}$. We~refer to \cite[Section~9]{KaKob16} where they illustrate their general theory
for reductive symmetric spaces $X=G/H$ in details in the special setting where
$X=\mathrm{AdS}^{3}$. See also~\cite{Kan19}.

Let $Q$ be a quadratic form on $\mathbb{R}^{4}$ defined by
$Q(x) = x_1^2 + x_2^2-x_3^2-x_4^2$ for $x= (x_1,x_2,x_3,x_4)$ and
we set
\begin{gather*}
\mathbb{H}^{2,1}:=\big\{x=(x_1,x_2,x_3,x_4) \in \mathbb{R}^4 \mid Q(x) = 1\big\}
\cong \mathrm{SO}_{0}(2,2)/\mathrm{SO}_{0}(2,1).
\end{gather*}
The tangent space $T_{x}(\mathbb{H}^{2,1})$ at $x\in\mathbb{H}^{2,1}$
is isomorphic to the orthogonal complement $(\mathbb{R} x)^{\bot}$ with respect to $Q$.
Then $-Q|_{(\mathbb{R} x)^{\bot}}$ is
a quadratic form of signature $(2,1)$ on
$T_{x}(\mathbb{H}^{2,1}) \cong (\mathbb{R} x)^{\bot}$ and
thus $-Q$ induces a Lorentzian structure on $\mathbb{H}^{2,1}$
with constant sectional curvature $-1$.
The~$3$-di\-men\-sional anti-de Sitter space
\begin{gather*}
\mathrm{AdS}^{3}:=\mathbb{H}^{2,1}/\{\pm 1\}
\cong \mathrm{SO}_{0}(2,2)/(\{\pm1\}\times\mathrm{SO}_{0}(2,1)),
\end{gather*}
inherits a Lorentzian structure through
the double covering $\pi\colon\mathbb{H}^{2,1}\rightarrow\mathrm{AdS}^{3}$.

\subsection{Some coordinates and ``pseudo-balls''}
%\label{coord}
In this subsection, we work with coordinates on $\mathbb{H}^{2,1}$ and
consider ``pseudo-balls'' in $\mathrm{AdS}^{3}$. We~identify
$\mathbb{H}^{2,1}$ with $\mathrm{SL}(2,\mathbb{R})$ using the isomorphism
\begin{align}
\begin{array}{@{}ccc}
\mathbb{H}^{2,1} & \xrightarrow{\cong} & \mathrm{SL}(2,\mathbb{R}), \\
x=(x_1,x_2,x_3,x_4) & \longmapsto & \begin{pmatrix}
x_1 + x_4 & -x_2 + x_3 \\
x_2 + x_3 & x_1 -x_4
\end{pmatrix}\!.\label{isom}
\end{array}
\end{align}
For $t\geq0$ and $\theta\in\mathbb{R}$, we use the notations
\begin{gather}
\label{k-a}
k(\theta) = \begin{pmatrix}
\cos\theta & -\sin\theta \\
\sin\theta & \cos\theta
\end{pmatrix}\!,\qquad
a(t) = \begin{pmatrix}
{\rm e}^t & 0 \\
0 & {\rm e}^{-t}
\end{pmatrix}\!.
\end{gather}
We embed $\mathbb{H}^{2,1}$ into $\mathbb{C}^{2}$ by
\begin{gather}
\label{embed}
x\mapsto (z_1,z_2) = \big(x_1 + \sqrt{-1}x_2, x_3 + \sqrt{-1}x_4\big).
\end{gather}
We note that $z_{1}\neq0$ if $x\in\mathbb{H}^{2,1}$.
Via the identification (\ref{isom}), we have
\begin{gather}
\label{polar}
(z_1,z_2) = \big((\cosh t){\rm e}^{\sqrt{-1}(\theta_1+\theta_2)},
(\sinh t){\rm e}^{\sqrt{-1}(\theta_1-\theta_2)}\big),
\end{gather}
if $x = k(\theta_1)a(t)k(\theta_2) \in \mathrm{SL}(2,\mathbb{R})$
(a ``polar coordinate''). In~particular, we have
\begin{gather*}
\cosh 2t = x_1^2 + x_2^2 + x_3^2 + x_4^2.
\end{gather*}
Next, we consider pseudo-balls on $\mathrm{AdS}^3$, as a special case of
Kassel--Kobayashi~\cite{KaKob16} for
reductive symmetric spaces.
\begin{Definition}
\label{def-pball}
For $x = (x_1,x_2,x_3,x_4)\in\mathbb{H}^{2,1}$, $\|x\|\in\mathbb{R}_{\geq0}$
is defined by
\begin{gather*}
\cosh\|x\|:=x_1^2 + x_2^2 + x_3^2 + x_4^2\qquad (= \cosh(2t)).
\end{gather*}
This function is invariant under $x\mapsto -x$, hence defines
a function on $\mathrm{AdS}^{3}$,
to be also denoted by $\|\cdot\|$ (a ``pseudo-distance'' from the origin).
The compact set
\begin{gather*}
B(R) := \big\{y\in\mathrm{AdS}^{3} \mid \|y\| \leq R\big\}
\end{gather*}
is called a pseudo-ball of radius $R$.
\end{Definition}

\subsection[Square-integrable eigenfunctions of the Laplacian on the anti-de Sitter space] {Square-integrable eigenfunctions of the Laplacian \\on the anti-de Sitter space}\label{preliminary-spherical}

In this subsection, we consider square-integrable eigenfunctions of
$\square_{\mathrm{AdS}^{3}}$ with eigenvalues $\lambda_{m}=4m(m-1)$. We~recall from \cite[Section~9]{KaKob16} the following decomposition of the open subset $\{Q > 0\}$ of
the flat pseudo-Riemannian manifold $\mathbb{R}^{2,2}=\big(\mathbb{R}^{4},Q({\rm d}x)\big)$:
\begin{align*}
\begin{array}{@{}ccc}
\{Q > 0\} & \xrightarrow{\cong} & \mathbb{R}_{>0} \times \mathbb{H}^{2,1}, \\[1ex]
x & \longmapsto & \big(\sqrt{Q(x)}, x/\sqrt{Q(x)}\big).
\end{array}
\end{align*}
Let $r=\sqrt{Q(x)}$. Then one has, see \cite[p.~215]{KaKob16},
\begin{align}
\label{decomposition-of-laplacian}
-r^2\square_{\mathbb{R}^{2,2}} = -\left(r\frac{\partial}{\partial r}\right)^2 - 2r\frac{\partial}{\partial r} + \square_{\mathbb{H}^{2,1}}.
\end{align}
Let $m$ be a positive integer and $k$ be a non-negative integer. In~the coordinates (\ref{embed}),
the homogeneous function $z_{1}^{-(k+2m)}z_{2}^{k}$ of degree $-2m$
 is harmonic with respect to $\square_{\mathbb{R}^{2,2}}$,
hence its restriction to the submanifold $\mathbb{H}^{2,1}$ is
an eigenfuction of $\square_{\mathbb{H}^{2,1}}$ with eigenvalue $\lambda_{m}=4m(m-1)$
by~the formula (\ref{decomposition-of-laplacian}).
Moreover, it is square-integrable with respect to the measure
$\sinh(2t){\rm d}\theta_{1}{\rm d}t{\rm d}\theta_{2}$ in~the polar coordinate (\ref{polar})
induced from the Lorentzian metric on $\mathbb{H}^{2,1}$,
as in the $k=0$ case \cite[Section~9]{KaKob16}.
This $L^{2}$-eigenfunction is invariant under $(z_{1},z_{2})\mapsto(-z_{1},-z_{2})$,
hence defines a~real analytic $L^{2}$-eigenfunction on $\mathrm{AdS}^{3}$
with eigenvalue $\lambda_{m}$, to be denoted by $\psi_{m,k}$.
The discrete spectrum
$\mathrm{Spec}_{d}(\square_{\mathrm{AdS}^{3}})$ coincides with
$\{\lambda_{m} \mid m \in \mathbb{N}\}$
and $L^{2}_{\lambda_{m}}\big(\mathrm{AdS}^{3}\big)$ is generated by
$\psi_{m,0}$ and its complex conjugate $\overline{\psi_{m,0}}$
as a representation of $\mathrm{SO}_{0}(2,2)$ (see \cite[Claim~9.12]{KaKob16}).
By~(\ref{polar}), we~have
\begin{align}
\label{polarFJ}
\psi_{m,k}(\pi(x))
= {\rm e}^{-2\sqrt{-1}(m\theta_1 + (m+k)\theta_2)}\tanh^{k}t\cosh^{-2m}t
\end{align}
for $x=k(\theta_1)\,a(t)\,k(\theta_2)\in\mathbb{H}^{2,1}$. We~refer to $\psi_{m,k}$ as a spherical function of type $(-m,m+k)$
in~accordance with the action of
$\mathrm{SO}(2)\times\mathrm{SO}(2)$.

\subsection{Convergence of generalized Poincar\'e series}\label{preliminary-Poincare}
In this subsection, we explain the fact about the discrete spectrum
of locally symmetric spaces by Kassel--Kobayashi~\cite{KaKob16}
in our $\mathrm{AdS}^{3}$ setting. We~use the following notation.
\begin{Notation}\quad
\begin{itemize}\itemsep=0pt
\item
Let $\grave{\ }G=\mathrm{PSL}(2,\mathbb{R})=
\mathrm{SL}(2,\mathbb{R})/\{\pm1\}$ and
$G=\grave{\ }G\times\grave{\ }G$.
\item
Let $\grave{\ }K=\mathrm{PSO}(2)=\mathrm{SO}(2)/\{\pm1\}$ and
$K = \grave{\ }K\times\grave{\ }K$.
\item
Let $E$ and $\grave{\ }E$ be respectively the identity elements of $G$ and $\grave{\ }G$.
\end{itemize}
\end{Notation}

\begin{Remark}
The double covering $\mathrm{SO}_{0}(2,2)\rightarrow G$ induces
an isomorphism $\mathrm{AdS}^{3}\cong G/\mathrm{diag} \grave{\ }G$ $(\cong \grave{\ }G)$.
From now on, we consider only discontinuous groups $\Gamma$ for $\mathrm{AdS}^{3}$
which are discrete subgroups of $G$.
This is enough for our purpose.
\end{Remark}

In order to study
$\operatorname{Spec}_{d}\big(\square_{\Gamma\backslash\mathrm{AdS}^3}\big)$,
Kassel--Kobayashi~\cite{KaKob16} considered the convergence and non-vanishing of
generalized Poincar\'e series
\begin{gather}
\label{Poincare-series}
\varphi^{\Gamma}(\Gamma x):=\sum_{\gamma \in \Gamma} \varphi\big(\gamma^{-1}x\big)
\end{gather}
for $K$-finite square-integrable eigenfunctions $\varphi$ of $\square_{\mathrm{AdS}^{3}}$.
For this, they used an analytic estimate of $\varphi$ and
a geometric estimate of the number of $\Gamma$-orbits
\begin{gather}
\label{orbit-count}
N_{\Gamma}(x,R) := \#\{\gamma \in \Gamma \mid \gamma x \in B(R)\}
\end{gather}
in the pseudo-ball $B(R)$ for $R>0$.
Since the $\Gamma$-action is properly discontinuous and $B(R)$ is compact,
we have $N_{\Gamma}(x,R)<\infty$.

The convergence of generalized Poincar\'e series is proved by~\cite{KaKob16} as follows.
For $g \in G$ and a~function $f$ on $\mathrm{AdS}^{3}$,
$\ell_{g}^{*}f$ is defined by $\ell_{g}^{*}f(x)=f\big(g^{-1}x\big)$.
\begin{Fact}[Kassel--Kobayashi~\cite{KaKob16}]\label{Poincare}
Let $\Gamma \subset G$ be a discontinuous group for $\mathrm{AdS}^{3}$
satisfying the exponential growth condition
\begin{gather}\label{C}
\exists A,a > 0,\qquad
\forall x\in \mathrm{AdS}^{3},\qquad
\forall R>0, \qquad
N_{\Gamma}(x,R) < A{\rm e}^{aR}.
\end{gather}
Then, for any $K$-finite eigenfunction $\varphi$ of $\square_{\mathrm{AdS}^{3}}$ with eigenvalue
$\lambda_{m}$
and any $g\in G$,
if $m > a$, then
$(\ell_{g}^{*}\varphi)^{\Gamma}$ $($see $\eqref{Poincare-series})$ is continuous and square-integrable on
$\Gamma\backslash\mathrm{AdS}^{3}$ and
an eigenfunction of~$\square_{\Gamma\backslash\mathrm{AdS}^{3}}$ with eigenvalue
$\lambda_{m}$.
\end{Fact}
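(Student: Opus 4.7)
The plan is to combine exponential decay of $K$-finite eigenfunctions with eigenvalue $\lambda_m$ (read off from the polar-coordinate formula~(\ref{polarFJ})) with the exponential growth hypothesis~(\ref{C}) on $\Gamma$-orbits, so that the absolute Poincar\'e series is dominated by a convergent geometric series whenever $m>a$. I~first establish a pointwise bound $|\ell_g^{*}\varphi(y)|\leq C_{\varphi,g}\,e^{-m\|y\|}$ uniformly in $y \in \mathrm{AdS}^3$. Since the $\lambda_m$-eigenspace in $L^{2}(\mathrm{AdS}^{3})$ is generated as a $G$-representation by $\psi_{m,0}$ and $\overline{\psi_{m,0}}$ (see Section~\ref{preliminary-spherical}), any $K$-finite $\lambda_m$-eigenfunction $\varphi$ is a finite linear combination of right-$K$-translates of $\psi_{m,k}$ and $\overline{\psi_{m,k}}$. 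From~(\ref{polarFJ}), $|\psi_{m,k}(\pi(x))| \leq \cosh^{-2m}t$, and since $\cosh\|\pi(x)\|=\cosh(2t)$ yields $\|\pi(x)\|=2t$, the elementary inequality $\cosh t\geq \tfrac{1}{2}e^t$ gives $\cosh^{-2m}t \leq 2^{2m}e^{-m\|\pi(x)\|}$. The function $\|\cdot\|$ is bi-$K$-invariant, while left translation by a general $g\in G$ shifts it by at most a bounded additive constant depending on $g$; this produces the stated bound.

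Next I estimate the Poincar\'e series. After the change of summation variable $\gamma \mapsto \gamma^{-1}$, I decompose $\Gamma$ into the shells $S_n = \{\gamma\in\Gamma : n-1\leq \|\gamma x\| < n\}$, for which $\#S_n \leq N_\Gamma(x,n) < A e^{an}$ by~(\ref{C}). Hence
\begin{gather*}
\sum_{\gamma \in \Gamma}\big|\ell_g^{*}\varphi(\gamma^{-1}x)\big|
\leq C_{\varphi,g}\sum_{n\geq 1}\#S_n\cdot e^{-m(n-1)}
\leq A\,C_{\varphi,g}\,e^{m}\sum_{n\geq 1}e^{(a-m)n},
\end{gather*}
which is finite for $m>a$ and, crucially, uniform in $x$. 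This gives uniform absolute convergence on $\mathrm{AdS}^{3}$ and hence continuity of $(\ell_g^{*}\varphi)^{\Gamma}$ on $\Gamma\backslash\mathrm{AdS}^{3}$. For square-integrability on a fundamental domain $\mathcal{F}$, I use the unfolding identity
\begin{gather*}
\|(\ell_g^{*}\varphi)^\Gamma\|_{L^{2}(\mathcal{F})}^{2}
= \int_{\mathrm{AdS}^{3}} \overline{(\ell_g^{*}\varphi)^\Gamma(y)}\,\ell_g^{*}\varphi(y)\,{\rm d}y
\leq \|(\ell_g^{*}\varphi)^\Gamma\|_{\infty} \cdot \|\ell_g^{*}\varphi\|_{L^{1}(\mathrm{AdS}^{3})},
\end{gather*}
whose right-hand side is finite because the uniform bound from the convergence step controls the first factor, and the decay rate $e^{-m\|y\|}$ dominates the exponential volume element $\sinh(2t)\,{\rm d}\theta_{1}{\rm d}t{\rm d}\theta_{2}$ in the polar coordinates for the range of $m$ in question.

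Finally, the eigenfunction property passes termwise: $\square_{\mathrm{AdS}^{3}}$ is $G$-invariant, so every summand $\ell_g^{*}\varphi\circ\gamma^{-1}$ is a $\lambda_m$-eigenfunction, and the distributional eigenequation transfers to the sum by dominated convergence applied to pairings against compactly supported test functions. The principal obstacle I anticipate lies in this last step: because $\square_{\mathrm{AdS}^{3}}$ is hyperbolic rather than elliptic, I cannot invoke elliptic regularity either to promote the weak equation to a strong one or to deduce smoothness of the limit from smoothness of the summands. The weak-sense interpretation of $\operatorname{Spec}_{d}$ used throughout this paper is, however, exactly what the dominated-convergence argument yields, so this obstacle does not prevent the conclusion.
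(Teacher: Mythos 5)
First, a point of reference: the paper does not prove Fact~\ref{Poincare} at all; it is imported verbatim from Kassel--Kobayashi~\cite{KaKob16}, so there is no in-paper argument to compare against. Your reconstruction follows the standard route (pointwise exponential decay of the eigenfunction against exponential orbit counting in shells), and the convergence/continuity step is sound: $m>a$ is exactly what makes the shell sum $\sum_n A{\rm e}^{an}{\rm e}^{-m(n-1)}$ converge. Two of your steps, however, have genuine problems.

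The reduction of a general $K$-finite eigenfunction to the $\psi_{m,k}$ is wrong as written. Here $K=\grave{\ }K\times\grave{\ }K$ is abelian and each $\psi_{m,k}$ has a pure $K$-type, so a $K$-translate of $\psi_{m,k}$ is merely a scalar multiple of $\psi_{m,k}$; the span of all $K$-translates of $\{\psi_{m,k},\overline{\psi_{m,k}}\}_{k}$ therefore contains only the $K$-types $(\mp m,\pm(m+k))$, whereas the $G$-representation generated by $\psi_{m,0}$ contains the types $(-(m+j),m+k)$ for all $j,k\geq0$. The uniform bound $|\varphi(y)|\leq C_{\varphi}{\rm e}^{-m\|y\|}$ for \emph{every} $K$-finite $\varphi\in L^{2}_{\lambda_{m}}\big(\mathrm{AdS}^{3}\big)$ is true, but it requires the decay estimate for all $K$-finite matrix coefficients of the discrete series (this is precisely the analytic input of~\cite{KaKob16}), not just the formula~\eqref{polarFJ}.

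The unfolding bound $\|(\ell_{g}^{*}\varphi)^{\Gamma}\|_{L^{2}(\mathcal{F})}^{2}\leq\|(\ell_{g}^{*}\varphi)^{\Gamma}\|_{\infty}\,\|\ell_{g}^{*}\varphi\|_{L^{1}(\mathrm{AdS}^{3})}$ silently assumes $\varphi\in L^{1}\big(\mathrm{AdS}^{3}\big)$. Since $|\varphi|\asymp\cosh^{-2m}(\|y\|/2)$ while the volume element grows like ${\rm e}^{\|y\|}$, one has $\varphi\in L^{1}$ if and only if $m>1$. The hypothesis is only $m>a$, and $a$ may be taken arbitrarily small for thin $\Gamma$ (e.g., cyclic groups), so the case $m=1$ lies within the scope of the Fact but makes the right-hand side of your inequality infinite. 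You must either restrict to $m\geq2$ (harmless for every application in this paper, where $m>m_{\Gamma}(k)$ is large) or use the symmetric unfolding $\|\varphi^{\Gamma}\|^{2}_{L^{2}(\mathcal{F})}\leq\sum_{\gamma}\int_{X}|\varphi(\gamma^{-1}z)||\varphi(z)|\,{\rm d}z$ and estimate each convolution-type term as in~\cite{KaKob16}. The same $L^{1}$ issue recurs in your final step: $p_{\Gamma}^{*}g$ is $\Gamma$-periodic, not compactly supported on $\mathrm{AdS}^{3}$, so the interchange of the sum with the pairing against $\square_{X_{\Gamma}}g$ should be justified via the uniform bound on $\sum_{\gamma}|\varphi(\gamma^{-1}y)|$ over the support of $g$ rather than via integrability of $\varphi$ on $\mathrm{AdS}^{3}$.
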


\begin{Remark}\label{remPoincare}\quad
\begin{enumerate}\itemsep=0pt
\item[(1)]
Fact~\ref{Poincare} does not assert the non-vanishing of the series
$(\ell_{g}^{*}\varphi)^{\Gamma}$ which is more involved.
Kassel--Kobayashi~\cite{KaKob16} proved that
there exists $g\in G$ such that
$(\ell^{*}_{g}\psi_{m,0})^{\Gamma}\neq 0$
for sufficiently large $m \in \mathbb{N}$.
\item[(2)]
%\label{remcount}
By \cite[Lemma~4.6.4]{KaKob16}, if a discontinuous group $\Gamma$ is sharp
in the sense of \cite[Definition~4.2]{KaKob16},
then $\Gamma$ satisfies the exponential growth condition (\ref{C}).
Moreover, Kassel~\cite{Ka09} and Gu\'eriataud--Kassel~\cite{GuKa17}
proved that finitely generated discontinuous groups for $\mathrm{AdS}^{3}$
are always sharp (see Fact~\ref{proper} below).
\item[(3)]
There exist discontinuous groups
which do not satisfy the exponential growth condition~(\ref{C}).
Indeed, for any increasing function $f\colon\mathbb{R}\rightarrow \mathbb{R}_{>0}$
and any $x\in\mathrm{AdS}^{3}$,
we~con\-st\-ructed
a discontinuous group $\Gamma_{f,x}$ for $\mathrm{AdS}^{3}$
satisfying $N_{\Gamma_{f,x}}(x,R) > f(R)$ for sufficiently large $R>0$ in~\cite{Kan19}.
\end{enumerate}
\end{Remark}

The conclusion of Fact~\ref{Poincare} still holds if
we drop the assumption that
$\Gamma$ acts freely on $X=\mathrm{AdS}^{3}$. In~this case, the quotient space $X_{\Gamma}=\Gamma\backslash X$ is an orbifold.
To formulate more precisely in the orbifold case, we observe that
the quotient space $X_{\Gamma}$ is Hausdorff, and carries a natural Radon measure
(see, e.g., \cite[Chapter~VII, Section~2, No.~2, Proposition~4]{Bourbaki-integral}).
A continuous function $g$ on $X_{\Gamma}$ is \textit{smooth} if
the pull-back $p_{\Gamma}^{*}g$ is a smooth function on $X$,
where $p_{\Gamma}\colon X\rightarrow X_{\Gamma}$ is the natural quotient
map. We~write $C_{c}^{\infty}(X_{\Gamma})$
for the set of smooth functions on $X_{\Gamma}$ with compact support.
For $g\in C_{c}^{\infty}(X_{\Gamma})$,
we define $\square_{X_{\Gamma}}g \in C_{c}^{\infty}(X_{\Gamma})$ by identifying it with
the $\Gamma$-invariant function $\square_{X}(p_{\Gamma}^{*}g)$.
For $\lambda\in \mathbb{C}$, we define
\begin{gather*}
L^{2}_{\lambda}(X_{\Gamma}) :=
\big\{f\in L^{2}(X_{\Gamma})\mid \forall g\in C_{c}^{\infty}(X_{\Gamma}),
\langle f,\square_{X_{\Gamma}} g\rangle_{X_{\Gamma}}=\lambda\langle f,g\rangle_{X_{\Gamma}}\big\}.
\end{gather*}
The discrete spectrum $\mathrm{Spec}_{d}(\square_{X_{\Gamma}})$ and
its multiplicity $\mathcal{N}_{X_{\Gamma}}$ are defined similarly
to the case where $\Gamma$ acts also freely.

\subsection{``Injectivity radii'' of anti-de Sitter 3-manifolds}\label{inj-radii}
Let $\Gamma$ be a discontinuous group for $\mathrm{AdS}^{3}$. In~this subsection, we give a uniform estimate of the pseudo-distance
between the origin and the second closest point of each $\Gamma$-orbit.

We recall that $\Gamma(\subset \grave{\ }G\times\grave{\ }G)$
acts isometrically on $\mathrm{AdS}^{3}(\cong\grave{\ }G)$ by
$(\gamma_{1},\gamma_{2})x=\gamma_{1} x\gamma_{2}^{-1}$
for~$(\gamma_{1},\gamma_{2})\in\Gamma$ and $x\in\grave{\ }G$. We~set
\begin{align}
\label{varepsilon}
\varepsilon_{\Gamma} := \inf_{(\gamma_1,\gamma_2)\in
\Gamma\setminus\{E\}}
\frac{1}{3}\big|\|\gamma_1\|-\|\gamma_2\|\big|.
\end{align}
By the inequality (see, e.g., \cite[Lemma~5.5]{Kan19})
\begin{gather*}
\|(g_1,g_2)x\|\geq \big | \|g_1\|-\|g_2\| \big | - \|x\|\qquad
\text{for}\quad (g_1,g_2) \in G \quad\text{and}\quad x \in \mathrm{AdS}^{3},
\end{gather*}
we get:
\begin{Lemma}\label{inj-rad}
If $\varepsilon_{\Gamma}>0$, then
$\gamma B(\varepsilon_{\Gamma}) \cap B(\varepsilon_{\Gamma})
= \varnothing$
for all $\gamma\in \Gamma\setminus\{E\}$.
\end{Lemma}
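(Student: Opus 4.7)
The plan is to prove the contrapositive by a direct contradiction argument: suppose some nontrivial $\gamma \in \Gamma$ violates the conclusion, extract a point that is simultaneously close to the origin and close to its $\gamma$-image, and contradict the lower bound built into the definition of $\varepsilon_\Gamma$.

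Concretely, write $\gamma = (\gamma_1,\gamma_2) \in \Gamma \setminus \{E\}$ and suppose that $\gamma B(\varepsilon_\Gamma) \cap B(\varepsilon_\Gamma)$ is nonempty. Then there exists $x \in B(\varepsilon_\Gamma)$ with $\gamma x \in B(\varepsilon_\Gamma)$, so both $\|x\| \leq \varepsilon_\Gamma$ and $\|\gamma x\| \leq \varepsilon_\Gamma$. Apply the displayed inequality $\|(g_1,g_2)x\| \geq \bigl|\|g_1\| - \|g_2\|\bigr| - \|x\|$ with $(g_1,g_2) = (\gamma_1,\gamma_2)$ to bound $\|\gamma x\|$ from below by $\bigl|\|\gamma_1\| - \|\gamma_2\|\bigr| - \|x\|$. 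By the definition \eqref{varepsilon} of $\varepsilon_\Gamma$ as an infimum, one has $\bigl|\|\gamma_1\| - \|\gamma_2\|\bigr| \geq 3\varepsilon_\Gamma$ for every element of $\Gamma \setminus \{E\}$, so $\|\gamma x\| \geq 3\varepsilon_\Gamma - \varepsilon_\Gamma = 2\varepsilon_\Gamma$. Combined with $\|\gamma x\| \leq \varepsilon_\Gamma$ this forces $\varepsilon_\Gamma \leq 0$, contradicting the hypothesis $\varepsilon_\Gamma > 0$.

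There is essentially no obstacle here: the lemma is a formal consequence of the quoted reverse triangle inequality and of the factor $\frac{1}{3}$ inserted in the definition of $\varepsilon_\Gamma$, which is engineered precisely so that the lower bound $3\varepsilon_\Gamma$ absorbs both the $\|x\|$ and the $\|\gamma x\|$ contributions with one $\varepsilon_\Gamma$ left over. The only point worth verifying carefully is that passing from the infimum to a pointwise bound $\bigl|\|\gamma_1\|-\|\gamma_2\|\bigr| \geq 3\varepsilon_\Gamma$ is legitimate for every nontrivial element, which is immediate from the definition of infimum.
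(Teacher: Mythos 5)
Your proof is correct and is exactly the argument the paper intends: the lemma is stated as an immediate consequence of the displayed inequality $\|(g_1,g_2)x\|\geq \bigl|\|g_1\|-\|g_2\|\bigr|-\|x\|$ together with the factor $\tfrac{1}{3}$ in the definition of $\varepsilon_\Gamma$, and you have simply written out that deduction. No differences worth noting.
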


\begin{Proposition}\label{strong}
Let $\Gamma$ be a discrete subgroup of $G$ acting properly discontinuously
on $\mathrm{AdS}^{3}$.
Then there exists $g \in G$
satisfying $\varepsilon_{g^{-1}\Gamma g} > 0$.
\end{Proposition}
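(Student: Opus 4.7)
The plan is to combine the Kobayashi--Benoist properness criterion with a genericity argument in a small compact neighborhood of $E\in G$. By the Kobayashi~\cite{Kob89,Kob96}--Benoist~\cite{Ben96} criterion applied to the reductive symmetric space $\mathrm{AdS}^{3}\cong G/\operatorname{diag}\grave{\ }G$ (whose $H$-side has Cartan projection equal to the diagonal of $\mathbb{R}^{2}_{\ge0}$), the proper discontinuity of $\Gamma$ is equivalent to $|\|\gamma_1\|-\|\gamma_2\||\to\infty$ as $\gamma=(\gamma_1,\gamma_2)\to\infty$ in $\Gamma$. Together with the discreteness of $\Gamma$ in $G$ (any infinite subset would escape every compact set), this forces the set
\begin{gather*}
\Gamma_M:=\{\gamma\in\Gamma\setminus\{E\} \mid |\|\gamma_1\|-\|\gamma_2\||\le M\}
\end{gather*}
to be finite for every $M>0$. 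Moreover, sub-additivity of the Cartan projection combined with $\|h^{-1}\|=\|h\|$ gives $\bigl|\|h^{-1}\gamma h\|-\|\gamma\|\bigr|\le 2\|h\|$, so for the compact neighborhood $\mathcal V:=\{(h_1,h_2)\in G \mid \|h_i\|\le 1\}$ of $E$, any $g=(h_1,h_2)\in\mathcal V$, and any $\gamma\in\Gamma$,
\begin{gather*}
\bigl|\|h_1^{-1}\gamma_1 h_1\|-\|h_2^{-1}\gamma_2 h_2\|\bigr|\ge\bigl|\|\gamma_1\|-\|\gamma_2\|\bigr|-4.
\end{gather*}
Setting $M=5$, every $\gamma\in\Gamma\setminus(\{E\}\cup\Gamma_5)$ therefore contributes a value $\ge 1$ to the infimum $3\varepsilon_{g^{-1}\Gamma g}$, uniformly in $g\in\mathcal V$.

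It remains to choose $g\in\mathcal V$ so that the finitely many elements of $\Gamma_5$ also contribute strictly positive values, that is, $\|h_1^{-1}\gamma_1 h_1\|\ne\|h_2^{-1}\gamma_2 h_2\|$ for every $\gamma\in\Gamma_5$. For each such $\gamma$ the function
\begin{gather*}
f_\gamma(h_1,h_2):=\cosh\|h_1^{-1}\gamma_1 h_1\|-\cosh\|h_2^{-1}\gamma_2 h_2\|
\end{gather*}
on $G$ is real analytic, since $\cosh\|x\|=\tfrac12\operatorname{tr}(x^{\top}x)$ for $x\in\mathrm{SL}(2,\mathbb R)$ under the identification~(\ref{isom}). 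The main technical obstacle is verifying $f_\gamma\not\equiv 0$ whenever $\gamma\ne E$: the case where exactly one component is the identity is immediate from $\cosh\|\cdot\|\ge 1$ with equality only at the identity, while if both components are non-trivial one fixes $h_2=\grave{\ }E$ and reduces the problem to non-constancy of the map $h\mapsto\cosh\|h^{-1}\gamma_1 h\|$ on $\grave{\ }G$ for $\gamma_1\ne\grave{\ }E$. Non-constancy is checked by an explicit $KAK$ computation using~(\ref{k-a})--(\ref{polar})---for instance, evaluating along $h=a(s)$ shows that $\cosh\|h^{-1}\gamma_1 h\|$ acquires a $\cosh(4s)$-type dependence that cannot be constant in $s$.

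Since $G$ is a connected real-analytic manifold and $f_\gamma\not\equiv 0$, each zero set $\{f_\gamma=0\}$ is a proper closed real-analytic subvariety of $G$, hence nowhere dense; the finite union $\bigcup_{\gamma\in\Gamma_5}\{f_\gamma=0\}$ is then also nowhere dense. Picking any $g$ in the open dense complement inside the interior of $\mathcal V$, finiteness of $\Gamma_5$ produces a uniform $\delta>0$ with $\bigl|\|h_1^{-1}\gamma_1 h_1\|-\|h_2^{-1}\gamma_2 h_2\|\bigr|\ge\delta$ for $\gamma\in\Gamma_5$, and combining with the uniform bound $\ge 1$ for $\gamma\notin\Gamma_5$ yields $\varepsilon_{g^{-1}\Gamma g}\ge\tfrac13\min(\delta,1)>0$, as desired.
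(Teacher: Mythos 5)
Your overall architecture is the same as the paper's: perturb $\Gamma$ by a conjugation, and use real-analyticity of $g\mapsto g^{-1}\gamma g$ together with the non-degeneracy of the condition $\|\gamma_1\|=\|\gamma_2\|$ to see that the set of bad conjugators is small. The one genuine difference is how you pass from ``each bad set is thin'' to ``a good $g$ exists and the infimum is positive'': the paper avoids all countably many bad sets at once via the Baire category theorem and then invokes proper discontinuity to conclude $\varepsilon_{g^{-1}\Gamma g}>0$, whereas you first use the Kobayashi--Benoist properness criterion to cut $\Gamma$ down to the finite set $\Gamma_5$, control the tail uniformly by the sub-additivity bound $\bigl|\|h^{-1}\gamma h\|-\|\gamma\|\bigr|\le 2\|h\|$, and then only need to avoid a finite union of proper analytic subsets. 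This is a nice quantitative variant (it even yields an explicit lower bound for $\varepsilon_{g^{-1}\Gamma g}$ on a compact neighborhood of $E$), at the cost of invoking the properness criterion earlier and more explicitly than the paper does.

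There are, however, two local errors in your verification that $f_\gamma\not\equiv 0$. First, the claim that $\cosh\|y\|\ge 1$ ``with equality only at the identity'' is false: $\|y\|=0$ exactly when $x_3=x_4=0$, i.e., for every $y\in\grave{\ }K$. So in the case $\gamma=(\gamma_1,\grave{\ }E)$ with $\gamma_1\in\grave{\ }K\setminus\{\grave{\ }E\}$ (a nontrivial elliptic element fixing $\sqrt{-1}$), the conclusion is not ``immediate''; you still need to move the fixed point, i.e., the same non-constancy argument as in your other case. Second, the illustrative computation with $h=a(s)$ does not establish non-constancy of $h\mapsto\cosh\|h^{-1}\gamma_1 h\|$ when $\gamma_1$ commutes with $a(s)$ (e.g., $\gamma_1=a(t)$, for which $a(s)^{-1}\gamma_1 a(s)=\gamma_1$); note also that conjugation by $\grave{\ }K$ preserves $\|\cdot\|$, so one truly must choose the conjugating direction according to the type of $\gamma_1$. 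Both gaps are repaired by the single observation that $\|h^{-1}\gamma_1 h\|=d_{\mathbb{H}^2}\bigl(\gamma_1\cdot h\sqrt{-1},\,h\sqrt{-1}\bigr)$ is the displacement function of $\gamma_1$ evaluated at $h\sqrt{-1}$, which is non-constant on $\mathbb{H}^2$ for every $\gamma_1\ne\grave{\ }E$ (minimized on the axis for hyperbolic elements, with infimum $0$ not attained for parabolic ones, and vanishing only at the fixed point for elliptic ones); this is in effect the three-case check the paper performs.
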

\begin{Remark}
One sees in the proof below that
the set of such $g$ is dense in $G$.
\end{Remark}
Proposition~\ref{strong} follows obviously
from the proper discontinuity of the $\Gamma$-action and the following lemma:
\begin{Lemma}\label{weak}
For any countable subset $\Gamma$ of
$G$,
there exists
$g\in G$
such that $\|\gamma_1\|\neq\|\gamma_2\|$
for all $\gamma=(\gamma_1,\gamma_2)\in
g^{-1}\Gamma g \setminus \{E\}$.
\end{Lemma}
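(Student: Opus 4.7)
The plan is a Baire-category argument: for each non-identity $\gamma=(\gamma_1,\gamma_2)\in \Gamma\setminus\{E\}$, the ``bad'' locus
\[ Z_\gamma := \bigl\{(g_1,g_2)\in G \;\big|\; \|g_1^{-1}\gamma_1 g_1\| = \|g_2^{-1}\gamma_2 g_2\|\bigr\} \]
will turn out to be a proper real-analytic subvariety of $G$, hence of empty interior and of Haar measure zero; since $\Gamma$ is countable, the union $\bigcup_{\gamma} Z_\gamma$ is meager and of measure zero, and any $g$ in its complement is as required.

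To obtain analyticity, I first rewrite the defining equation of $Z_\gamma$ in terms of $\cosh\|\cdot\|$. Via \eqref{isom} one has $\cosh\|x\|=\tfrac12\operatorname{tr}(x^\top x)$ on $\grave{\ }G=\mathrm{PSL}(2,\mathbb{R})$, a polynomial in the matrix entries, and $\cosh$ is strictly increasing on $\mathbb{R}_{\geq 0}$. Hence
\[ F_\gamma(g_1,g_2):=\cosh\|g_1^{-1}\gamma_1 g_1\|-\cosh\|g_2^{-1}\gamma_2 g_2\| \]
is a real-analytic function on the connected real-analytic manifold $G$ with $Z_\gamma=F_\gamma^{-1}(0)$.

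The core step is to show $F_\gamma\not\equiv 0$ whenever $\gamma\neq E$. Writing $F_\gamma(g_1,g_2)=u_{\gamma_1}(g_1)-u_{\gamma_2}(g_2)$ with $u_{\gamma'}(g):=\cosh\|g^{-1}\gamma' g\|$, and noting that $\gamma\neq E$ forces $\gamma_i\neq\grave{\ }E$ for at least one $i$, it suffices to check that $u_{\gamma'}$ is non-constant on $\grave{\ }G$ for every $\gamma'\in\grave{\ }G\setminus\{\grave{\ }E\}$. Using \eqref{k-a}, a direct calculation yields, for $\gamma'\in\mathrm{SL}(2,\mathbb{R})$ with entries $a,b,c,d$,
\[ u_{\gamma'}(a(t)) = \tfrac12\bigl(a^2+d^2+b^2\mathrm{e}^{-4t}+c^2\mathrm{e}^{4t}\bigr), \]
which is non-constant in $t$ whenever $(b,c)\neq(0,0)$. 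The remaining diagonal case $\gamma'=a(s)$ with $s\neq 0$ is reduced to this one by first conjugating by a generic $k(\theta)\in\grave{\ }K$ to obtain an off-diagonal representative of the conjugacy class and then applying the same formula.

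Finally, a non-constant real-analytic function on a connected real-analytic manifold has a zero set of empty interior and Haar measure zero; the Baire category theorem (resp.\ countable additivity) then gives that $\bigcup_{\gamma\in\Gamma\setminus\{E\}}F_\gamma^{-1}(0)$ is meager (resp.\ of measure zero), so its complement is dense in $G$. Any $g$ chosen from this complement satisfies $\|g_1^{-1}\gamma_1 g_1\|\neq\|g_2^{-1}\gamma_2 g_2\|$ for all $(\gamma_1,\gamma_2)\in\Gamma\setminus\{E\}$, as required; this also justifies the density remark preceding the lemma. The only point requiring genuine verification is the non-constancy of $u_{\gamma'}$, and the short computation above keeps this routine.
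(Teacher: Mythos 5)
Your proposal is correct and follows essentially the same route as the paper: a Baire-category argument over the countable family of proper real-analytic ``bad'' loci $Z_\gamma=f_\gamma^{-1}(F)$. The only difference is in verifying properness of each locus, where the paper appeals to a three-case remark (hyperbolic/parabolic/elliptic) to find $g_1$ with $\|g_1^{-1}\gamma_1 g_1\|\neq\|\gamma_1\|$, while you give an explicit computation of $\cosh\|a(t)^{-1}\gamma' a(t)\|$ plus a rotation to handle the diagonal case --- a valid and arguably more self-contained substitute.
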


\begin{proof}[Proof of Lemma~\ref{weak}]
For $\gamma \in \Gamma$,
the map $f_{\gamma}\colon G \rightarrow G$ defined by $g\mapsto g^{-1}\gamma g$
is real analytic.
For the analytic subset
$F = \{(g_1,g_2) \in G
\mid \|g_1\| = \|g_2\| \}$ of $G$, we claim that
the set $f_{\gamma}^{-1}(F)$ is a proper subset of $G$ if $\gamma\neq E$.
For this, we may assume $\gamma_1 \neq \grave{\ }E$ without loss of generality.
Then there exists
$g_1 \in \grave{\ }G$ satisfying $\|g_1^{-1}\gamma_1g_1\| \neq \|
\gamma_1\|$ as one can find $g_{1}$ depending on the three cases where
$\gamma_{1}$ is hyperbolic, parabolic, or elliptic.
Hence $(g_1,\grave{\ }E)\notin f_{\gamma}^{-1}(F)$ if $\|\gamma_{1}\|=\|\gamma_{2}\|$,
and $E\notin f_{\gamma}^{-1}(F)$ if not. Thus $f_{\gamma}^{-1}(F)$ is a proper subset of $G$.

Therefore the analytic set
$f_{\gamma}^{-1}(F)$ has no interior point, and thus so does the countable union
$\bigcup_{\gamma \in \Gamma \setminus \{E\}}
 f_{\gamma}^{-1}(F)$ by the Baire category theorem
(see, e.g., \cite[Theorem~2.2]{functional-analysis}).
Hence there exists an element
$g$ of $G\setminus\bigcup_{\gamma \in \Gamma \setminus \{E\}}
 f_{\gamma}^{-1}(F)$ and we have
$\|\gamma_1\|\neq\|\gamma_2\|$
for all $\gamma=(\gamma_1,\gamma_2)\in
g^{-1}\Gamma g \setminus \{E\}$.
\end{proof}

\section{Proof of Theorem~\ref{mthm1}}\label{proof-linear-indep}
In this section, we prove Theorem~\ref{mthm1}.

More generally, without finitely generated assumption of $\Gamma$,
we study linear independence of~the generalized Poincar\'e series
of the spherical functions $\psi_{m,k}$ of type $(-m,m+k)$
defined in~Section~\ref{preliminary-spherical}.
By choosing $k=3^{j}$ ($j=0,1,2,\ldots$),
we prove:
\begin{Theorem}
\label{unbound}
If $\Gamma$ is a discontinuous group for $\mathrm{AdS}^{3}$ satisfying
the exponential growth condition~\eqref{C}, then
\begin{gather*}
\lim_{m\to\infty}\mathcal{N}_{\Gamma\backslash\mathrm{AdS}^{3}}(\lambda_{m})=\infty.
\end{gather*}
\end{Theorem}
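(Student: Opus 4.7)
The plan is to prove, for each $N\geq 1$ and every sufficiently large $m$, that the $N$ generalized Poincar\'e series $\psi_{m,k_{0}}^{\Gamma},\dots,\psi_{m,k_{N-1}}^{\Gamma}$ with $k_{j}=3^{j}$ are linearly independent in $L^{2}_{\lambda_{m}}(\Gamma\backslash\mathrm{AdS}^{3})$. For $m>a$ (with $a$ the constant of~\eqref{C}), each of these series is a continuous $L^{2}$-eigenfunction of $\square_{\Gamma\backslash\mathrm{AdS}^{3}}$ with eigenvalue $\lambda_{m}$ by Fact~\ref{Poincare}, so linear independence of $N$ of them forces $\mathcal{N}_{\Gamma\backslash\mathrm{AdS}^{3}}(\lambda_{m})\geq N$, whence the theorem on letting $N\to\infty$. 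By Proposition~\ref{strong} I first replace $\Gamma$ by a conjugate so that $\varepsilon_{\Gamma}>0$; this is harmless, since conjugation by $g\in G$ is an isometry of $\mathrm{AdS}^{3}$ and preserves both the spectrum and its multiplicities.

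The heart of the argument is a pointwise comparison of the main term and the tail of the Poincar\'e series on a small pseudo-ball around the origin. Suppose $\sum_{j=0}^{N-1}c_{j}\psi_{m,k_{j}}^{\Gamma}\equiv 0$ on $\Gamma\backslash\mathrm{AdS}^{3}$; set $\delta:=\varepsilon_{\Gamma}/2$ and evaluate the lifted identity at $x=\pi(a(t))\in B(\delta)$ for $t$ in a fixed compact interval $[t_{1},t_{2}]\subset(0,\delta/2)$. By~\eqref{polarFJ}, the $\gamma=E$ contribution equals $\cosh^{-2m}(t)\sum_{j}c_{j}\tanh^{k_{j}}(t)$. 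By Lemma~\ref{inj-rad} and the triangle inequality preceding it, every $\gamma\neq E$ satisfies $\|\gamma^{-1}x\|\geq 3\varepsilon_{\Gamma}-\delta = 5\varepsilon_{\Gamma}/2$, so the tail is dominated pointwise, uniformly in $j$ (since $|\tanh|\leq 1$), by
\[
T_{m}(x) := \sum_{\gamma\neq E}\cosh^{-2m}\bigl(\|\gamma^{-1}x\|/2\bigr).
\]
Multiplying through by $\cosh^{2m}(t)\leq e^{mt^{2}}$ (valid for all $t\geq 0$, from $\cosh t\leq e^{t^{2}/2}$) yields
\[
\bigl|\sum_{j=0}^{N-1}c_{j}\tanh^{k_{j}}(t)\bigr| \;\leq\; \|c\|_{1}\,e^{mt^{2}}\,T_{m}(x).
\]
Once the right-hand side is shown to tend to zero as $m\to\infty$ uniformly in $t\in[t_{1},t_{2}]$, the polynomial $u\mapsto\sum_{j}c_{j}u^{k_{j}}$ in $u=\tanh(t)$ goes uniformly to zero on $[\tanh t_{1},\tanh t_{2}]$; evaluating at $N$ distinct points and inverting the resulting generalized Vandermonde matrix (whose norm depends only on $N$, the $k_{j}$, and the chosen points, not on $m$) forces $c_{j}=0$ for all $j$.

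The main obstacle is the uniform tail estimate $e^{mt^{2}}T_{m}(x)\to 0$. I split the sum defining $T_{m}$ into near orbits $\|\gamma^{-1}x\|\leq 2$ and far orbits $\|\gamma^{-1}x\|>2$. For the near part, the sharp Taylor refinement $\ln\cosh(y)\geq \tfrac{5}{12}y^{2}$ on $[0,1]$ gives $\cosh^{-2m}(\|\gamma^{-1}x\|/2)\leq e^{-(5m/24)\|\gamma^{-1}x\|^{2}}$, and combining with~\eqref{C} yields a contribution of order $e^{-(5m/24)(5\varepsilon_{\Gamma}/2)^{2}}$. For the far part, the elementary bound $\cosh^{-2m}(y)\leq 4^{m}e^{-2my}$ together with~\eqref{C} yields a bound of order $(4/e^{2})^{m}$. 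Both decay strictly faster than $e^{mt^{2}}$ grows when $t\leq\delta/2 = \varepsilon_{\Gamma}/4$, comfortably below the threshold $\sqrt{5/24}\cdot(5\varepsilon_{\Gamma}/2)\approx 1.14\,\varepsilon_{\Gamma}$ dictated by the near part. The Taylor refinement near $y=0$ is the essential technical point: a naive use of $\cosh(y)\geq e^{y}/2$ alone would only handle $\varepsilon_{\Gamma}>\tfrac{1}{3}\ln 4$, whereas the refined analysis applies to arbitrarily small $\varepsilon_{\Gamma}>0$.
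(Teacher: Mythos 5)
Your argument is correct, and it reaches the conclusion by a genuinely different route from the paper at the key ``non-cancellation'' step. The paper also reduces to showing that a linear combination of the Poincar\'e series of $\psi_{m,3^{j}}$ cannot vanish, and it likewise isolates the identity term from the tail using $\varepsilon_{\Gamma}>0$ (after the same conjugation via Proposition~\ref{strong}); but there the difficulty is resolved at a \emph{single} test point $x_{a,\varepsilon}=k(\theta_{a,3}/2)a(\varepsilon)k(\theta_{a,3}/2)^{-1}$, whose angle $\theta_{a,3}$ is chosen by a combinatorial lemma (Lemma~\ref{sekoi}, which is where the geometric progression $k_{j}=3^{j}$ is actually needed) so that the signs of $\cos\big(3^{j}\theta_{a,3}\big)$ match those of the real coefficients $b_{j}$; the main term is then bounded \emph{below} by $(\cosh\varepsilon)^{-2m}f_{b}(\tanh\varepsilon)$ with $f_{b}$ having non-negative coefficients, and Lemma~\ref{koukou} (a shell decomposition in radii $4\varepsilon n$, close in spirit to your far-part estimate) shows the tail is strictly smaller. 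You instead test at $N$ points on the axis $\pi(a(t))$, where every $\psi_{m,k}$ is real and positive by~\eqref{polarFJ}, bound the main term from \emph{above}, and invert a generalized Vandermonde matrix. This buys you something: you never need Lemma~\ref{sekoi}, you handle complex coefficients directly rather than passing to real parts, and the exponents $3^{j}$ play no special role (any strictly increasing $k_{j}$ would do). What you lose is the explicit, deformation-uniform threshold $m_{\Gamma}(k)$ of~\eqref{Poincare_estimate}, which the paper needs later for Theorem~\ref{mthm2}; your threshold additionally involves the norm of the inverse Vandermonde matrix, which is explicit in principle but less clean.

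Two small points to tighten. First, the coefficient vector $c$ may depend on $m$, so ``$P(\tanh t)\to 0$'' is not literally meaningful; normalize $\|c\|_{\infty}=1$ and run the Vandermonde inversion to get $1\leq C_{N}\,\epsilon_{m}$ with $\epsilon_{m}:=\sup_{t\in[t_{1},t_{2}]}{\rm e}^{mt^{2}}T_{m}(\pi(a(t)))\to 0$, a contradiction for large $m$. Second, your far-part bound of order $\big(4{\rm e}^{-2}\big)^{m}$ beats ${\rm e}^{mt^{2}}$ only for $t^{2}<2-\log 4$, which your constraint $t\leq\varepsilon_{\Gamma}/4$ does not guarantee when $\varepsilon_{\Gamma}$ is large; simply take $[t_{1},t_{2}]\subset\big(0,\min\{\varepsilon_{\Gamma}/4,1/2\}\big)$, which costs nothing since the interval is at your disposal. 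You should also record (or cite) the fact that the generalized Vandermonde matrix $\big(u_{i}^{k_{j}}\big)$ is nonsingular for distinct $u_{i}>0$ and distinct exponents; this is classical, and with it the proof is complete.
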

Theorem~\ref{mthm1} is a direct consequence of
Theorem~\ref{unbound} by Remark~\ref{remPoincare}(2).

\begin{Proposition}
\label{mainthm}
Let $\Gamma$ be a discrete subgroup of $G$ acting properly discontinuously
on $\mathrm{AdS}^{3}$
and satisfying the exponential growth condition \eqref{C}.
If $\varepsilon_{\Gamma}>0$,
then there exists a real number $m_{\Gamma}(k)$
$($given explicitly by $\eqref{Poincare_estimate})$
for $k\in \mathbb{N}$
such that
$\{(\operatorname{Re}(\psi_{m,3^{j}}))^{\Gamma}\}_{j=0}^{k-1}
\subset L^2_{\lambda_{m}}(\Gamma\backslash\mathrm{AdS}^{3})$
are linearly independent
for all integers $m > m_{\Gamma}(k)$.
\end{Proposition}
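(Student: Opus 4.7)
The plan is to separate the identity contribution from each Poincaré series by testing against $K$-Fourier characters along a single $K$-orbit sitting inside the injectivity ball from Lemma~\ref{inj-rad}, and then to invoke diagonal dominance for the resulting $k\times k$ matrix. Fix $t_{0}\in (0,\varepsilon_{\Gamma}/2)$ and set $x_{0}:=a(t_{0})\in \mathrm{AdS}^{3}$. Since $\|\kappa\cdot x_{0}\|=2t_{0}<\varepsilon_{\Gamma}$ for every $\kappa\in K$, the inequality preceding Lemma~\ref{inj-rad} combined with the definition of $\varepsilon_{\Gamma}$ gives
\[
\|\gamma^{-1}\kappa\cdot x_{0}\|\geq L:=3\varepsilon_{\Gamma}-2t_{0}\qquad (\gamma\in\Gamma\setminus\{E\},\ \kappa\in K),
\]
and the choice $t_{0}<\varepsilon_{\Gamma}/2\leq 3\varepsilon_{\Gamma}/4$ forces $t_{0}<L/2$, a condition that will be crucial. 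For each $j\in\{0,\dots,k-1\}$, define
\[
\Phi_{j}(F):=\int_{K}F(\kappa\cdot x_{0})\,\overline{\chi_{m,3^{j}}(\kappa)}\,d\kappa,
\]
where $\chi_{m,r}$ is the $K$-character by which $\psi_{m,r}$ transforms (read off from~(\ref{polarFJ})). It suffices to prove that the matrix $M_{jj'}:=\Phi_{j}\bigl((\operatorname{Re}\psi_{m,3^{j'}})^{\Gamma}\bigr)$ is invertible for all $m>m_{\Gamma}(k)$; this implies linear independence at once.

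Pulling back to $\mathrm{AdS}^{3}$ and peeling off the identity term, the identity contribution to $M_{jj'}$ is diagonal: since the characters $\chi_{m,3^{j}}$ are pairwise distinct (the $3^{j}$ are) and none of $\overline{\chi_{m,3^{j'}}}$ coincides with $\chi_{m,3^{j}}$ (because $m\neq 0$), $K$-Fourier orthogonality together with~(\ref{polarFJ}) yields
\[
\Phi_{j}(\operatorname{Re}\psi_{m,3^{j'}})=\tfrac{1}{2}\,\delta_{jj'}\,\tanh^{3^{j}}(t_{0})\,\cosh^{-2m}(t_{0}).
\]
The analytic heart of the argument is bounding the tail. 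From~(\ref{polarFJ}) one has $|\psi_{m,r}(y)|\leq \cosh^{-2m}(\|y\|/2)$, and I plan to establish
\[
\sum_{\gamma\in\Gamma\setminus\{E\}}\cosh^{-2m}\!\bigl(\|\gamma^{-1}\kappa\cdot x_{0}\|/2\bigr)\leq C_{\Gamma}\,m\,\cosh^{-2m}(L/2)
\]
uniformly in $\kappa\in K$, with $C_{\Gamma}$ depending only on $\Gamma$ and $a$. The key technical step is to rewrite the sum as the Stieltjes integral $\int_{L}^{\infty}\cosh^{-2m}(R/2)\,dN_{\Gamma}(\kappa\cdot x_{0},R)$, integrate by parts using the exponential growth condition~(\ref{C}), and then use the factorization $\cosh(L/2+s)\geq \cosh(L/2)\cosh(s)$ to pull the factor $\cosh^{-2m}(L/2)$ out of the integral \emph{before} bounding the residual $\int_{0}^{\infty}\cosh^{-2m}(s)e^{2as}\,ds$ (which is finite and bounded in $m$ for $m>a$). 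Doing so avoids the lossy pointwise bound $\cosh^{-2m}(u)\leq 4^{m}e^{-2mu}$, which would introduce an unwanted factor $4^{m}$ and force an artificial lower bound on $\varepsilon_{\Gamma}$.

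Since $t_{0}<L/2$, the ratio $\cosh(t_{0})/\cosh(L/2)<1$, so $\bigl(\cosh(t_{0})/\cosh(L/2)\bigr)^{2m}$ decays exponentially in $m$. Consequently every off-diagonal entry of $M$ (which is purely tail) becomes much smaller than the smallest diagonal entry $\tfrac{1}{2}\tanh^{3^{k-1}}(t_{0})\cosh^{-2m}(t_{0})$ once $m$ is large, and the diagonal-dominance criterion---requiring the $j$-th row sum of tail absolute values to be strictly less than $\tfrac{1}{2}\tanh^{3^{j}}(t_{0})\cosh^{-2m}(t_{0})$---renders $M$ invertible as soon as $m$ exceeds the explicit threshold obtained by solving this inequality; this defines $m_{\Gamma}(k)$. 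The principal obstacle is the tension between the super-exponentially shrinking diagonal entries $\tanh^{3^{k-1}}(t_{0})\sim t_{0}^{3^{k-1}}$ and the available tail decay; however, because $\cosh(L/2)>\cosh(t_{0})$ holds unconditionally in $\Gamma$, the exponential gain in $m$ always outpaces the vanishing diagonal, at the cost of $m_{\Gamma}(k)$ growing roughly like a constant multiple of $3^{k-1}$ in $k$.
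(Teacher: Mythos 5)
Your argument is correct, and it takes a genuinely different route from the paper's. The paper reduces linear independence to the non-vanishing of $(\operatorname{Re}(\psi_{m,b}))^{\Gamma}$ for an arbitrary nonzero coefficient vector $b$, and then evaluates at a single point $x_{a,\varepsilon}$ chosen via a combinatorial lemma (Lemma~\ref{sekoi}) so that the leading terms $b_j\cos\big(3^{j}\theta_{a,3}\big)$ all reinforce one another; the tail is handled by Lemma~\ref{koukou}, whose hypothesis (a polynomial with non-negative coefficients) is available precisely because of Lemma~\ref{sekoi}. You instead integrate against $K$-characters along the orbit $K\cdot a(t_{0})$, which diagonalizes the identity contribution by orthogonality --- your check that $\overline{\chi_{m,3^{j'}}}\neq\chi_{m,3^{j}}$ because $m\geq 1$ is exactly the point that needs verifying, given that $\operatorname{Re}\psi_{m,r}$ mixes the types $(-m,m+r)$ and $(m,-(m+r))$ --- and you conclude by diagonal dominance. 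Your tail estimate (Abel summation against $N_{\Gamma}(\cdot,R)$, then $\cosh(L/2+s)\geq\cosh(L/2)\cosh(s)$) is sound and uniform in $\kappa$ since \eqref{C} is uniform in the base point, and the lower bound $\|\gamma^{-1}\kappa\cdot x_{0}\|\geq 3\varepsilon_{\Gamma}-2t_{0}$ with $t_{0}<L/2$ is the same use of $\varepsilon_{\Gamma}$ as in Lemma~\ref{inj-rad}. What your approach buys: it dispenses entirely with Lemma~\ref{sekoi} and with any special role for the geometric progression $3^{j}$ --- any $k$ distinct exponents would work, the largest exponent entering only through the factor $\tanh^{-3^{k-1}}(t_{0})$ in the threshold, which stays $O\big(3^{k-1}\big)$ in $k$. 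What it costs: your threshold carries an extra factor of order $\log(1/\tanh t_{0})\sim\log(1/\varepsilon_{\Gamma})$ compared with the paper's $\tilde{m}=O\big(\varepsilon_{\Gamma}^{-2}\big)$, and in particular it is not literally the quantity $m_{\Gamma}(k)$ of \eqref{Poincare_estimate} that the proposition advertises; so you prove the existence statement with a comparable but different explicit bound, and if one wants the downstream uniformity in Theorems~\ref{non-standard} and~\ref{standard} exactly as stated one should either track your constant explicitly or note the harmless logarithmic loss.
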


Postponing the proof of Proposition~\ref{mainthm} until the end of this section,
we prove Theorem~\ref{unbound}.

\begin{proof}[Proof of Theorem~\ref{unbound}]
We have an obvious equality of the multiplicity of $L^2$-eigenvalues,
$\mathcal{N}_{\Gamma\backslash\mathrm{AdS}^{3}} =
\mathcal{N}_{(g^{-1}\Gamma g)\backslash\mathrm{AdS}^{3}}$
for any $g\in G$
through the natural isomorphism
$\Gamma\backslash\mathrm{AdS}^{3} \cong
\big(g^{-1}\Gamma g\big)$ $\backslash\mathrm{AdS}^{3}$ as Lorentzian manifolds.
By replacing $\Gamma$ with $g^{-1}\Gamma g$ if necessary,
we may and do assume $\varepsilon_{\Gamma}>0$ by Proposition~\ref{strong}.
Then Proposition~\ref{mainthm} implies that
$L^2_{\lambda_{m}}\big(\Gamma\backslash \mathrm{AdS}^{3}\big)$ contains at least
$k$ linearly independent elements
if $m>m_{\Gamma}(k)$ for any fixed $k\in\mathbb{N}$,
which means
$\dim_{\mathbb{C}} L^2_{\lambda_{m}}(\Gamma\backslash\mathrm{AdS}^{3})
\geq k.$
Hence Theorem~\ref{unbound} follows.
\end{proof}

Kassel--Kobayashi~\cite{KaKob16} proved the non-vanishing of
the generalized Poincar\'e series $(\psi_{m,0})^{\Gamma}$
for sufficiently large $m\in\mathbb{N}$
by showing that the first term in the generalized Poincar\'e series
is larger at the origin than the sum of the remaining terms.
For this, they utilized the fact that $\psi_{m,0}(\grave{\ }E)=1$.
Our strategy for the proof of Proposition~\ref{mainthm} is along the same line,
however, there are some technical difficulties
since $\psi_{m,k}$ for $k\geq1$ vanishes at the origin. We~then make use of an observation that
$\psi_{m,k}$ decays more slowly at the origin than at infinity,
to be precise, by the following formula, see (\ref{polarFJ}):
\begin{gather*}
|\psi_{m,k}(x)|=\cosh^{-2m}(\|x\|/2) \tanh^{k}(\|x\|/2).
\end{gather*}
Actually, we use an analytic lemma (Lemma~\ref{koukou})
to prove that the first term in the generalized Poincar\'e series $(\psi_{m,k})^{\Gamma}$
is larger at points sufficiently close to the origin
than the sum of the remaining terms if $m\gg0$.
Moreover,
we use a combinatorial lemma (Lemma~\ref{sekoi})
to find points at which
leading terms of
$(\operatorname{Re}(\psi_{m,k}))^{\Gamma}$
do not cancel each other for any linear combination.

For $C,a,\varepsilon > 0$ and $s \in \mathbb{N}$, we set
\begin{align*}%\label{inequality}
m(C,a,\varepsilon,s) := \frac{(\log2)s + 2a\varepsilon + \log\big(1+2^{s}C{\rm e}^{6a\varepsilon}\big)}{\log\cosh\varepsilon}
\end{align*}
and
\begin{align*}
\tilde{m}(C,a,\delta,s) :=
\inf_{0<\varepsilon<\delta}m(C,a,\varepsilon,s).
\end{align*}
Note that $\tilde{m}(C,a,\delta,s)=O\big(\delta^{-2}\big)$ as $\delta\to 0$
and $=O(1)$ as $\delta\to \infty$.
\begin{Lemma}\label{koukou}
For any integer $m > m(C,a,\varepsilon,s)$
and any one-variable polynomial $f$ of degree $\leq s$ with non-negative coefficients,
\begin{gather*}
C\sum_{n=1}^{\infty}{\rm e}^{4a(n+1)\varepsilon}(\cosh 2n\varepsilon)^{-m}f(\tanh
2(n+1)\varepsilon) < (\cosh \varepsilon)^{-m}f(\tanh \varepsilon).
\end{gather*}
\end{Lemma}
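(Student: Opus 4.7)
The plan is to bound the left-hand side of the lemma by a convergent geometric series, which reduces the inequality to a quadratic condition on $(\cosh\varepsilon)^{m}$ that turns out to be equivalent to $m>m(C,a,\varepsilon,s)$. Three elementary estimates are needed, one for each factor in a typical summand. First, $y\mapsto (\tanh y)/y$ is strictly decreasing on $(0,\infty)$ (from a quick inspection of its derivative), so $\tanh 2(n+1)\varepsilon\le 2(n+1)\tanh\varepsilon$; since $f$ has non-negative coefficients and degree $\le s$, the elementary comparison $f(\lambda x)\le \lambda^{s}f(x)$ for $\lambda\ge 1$ then gives $f(\tanh 2(n+1)\varepsilon)\le (2(n+1))^{s}f(\tanh\varepsilon)$. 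Second, an induction from $\cosh((k+1)t)\ge \cosh(kt)\cosh t$ yields $\cosh(nt)\ge \cosh^{n}t$, and combined with $\cosh 2\varepsilon\ge \cosh^{2}\varepsilon$ this forces $\cosh 2n\varepsilon\ge (\cosh\varepsilon)^{2n}$, so $(\cosh 2n\varepsilon)^{-m}\le (\cosh\varepsilon)^{-2mn}$. Third, $n+1\le 2^{n}$ for $n\ge 1$ gives $(n+1)^{s}\le 2^{sn}$, converting the polynomial weight into an exponential one.

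Setting $v:={\rm e}^{4a\varepsilon}(\cosh\varepsilon)^{-2m}$ and combining these bounds, the left-hand side is at most $Cf(\tanh\varepsilon)\,2^{s}{\rm e}^{4a\varepsilon}\sum_{n\ge 1}(2^{s}v)^{n}$, which (whenever $2^{s}v<1$) sums to
\[\frac{C\,2^{2s}{\rm e}^{8a\varepsilon}f(\tanh\varepsilon)}{(\cosh\varepsilon)^{2m}-2^{s}{\rm e}^{4a\varepsilon}}.\]
Comparing with $(\cosh\varepsilon)^{-m}f(\tanh\varepsilon)$ and writing $X:=(\cosh\varepsilon)^{m}$, the lemma reduces to the quadratic inequality $X^{2}>C\,2^{2s}{\rm e}^{8a\varepsilon}X+2^{s}{\rm e}^{4a\varepsilon}$. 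The larger root of $X^{2}-bX-c$ is at most $b+\sqrt{c}$ (using $\sqrt{b^{2}+4c}\le b+2\sqrt{c}$), so with $b=C\,2^{2s}{\rm e}^{8a\varepsilon}$, $c=2^{s}{\rm e}^{4a\varepsilon}$, and $2^{s/2}\le 2^{s}$, it suffices that $(\cosh\varepsilon)^{m}>2^{s}{\rm e}^{2a\varepsilon}(1+2^{s}C{\rm e}^{6a\varepsilon})$. Taking logarithms recovers $m>m(C,a,\varepsilon,s)$, and the side condition $2^{s}v<1$ then follows automatically because $(\cosh\varepsilon)^{m}>2^{s}{\rm e}^{2a\varepsilon}$ squared gives $(\cosh\varepsilon)^{2m}>2^{2s}{\rm e}^{4a\varepsilon}>2^{s}{\rm e}^{4a\varepsilon}$.

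The main bookkeeping obstacle is ensuring that the sufficient condition matches the stated one \emph{exactly}: the two factors of $2^{s}$ in $m(C,a,\varepsilon,s)$ arise respectively from $\lambda=2(n+1)$ in the $\tanh$-comparison and from $(n+1)^{s}\le 2^{sn}$ in the geometric resummation, while the split ${\rm e}^{8a\varepsilon}={\rm e}^{2a\varepsilon}\cdot {\rm e}^{6a\varepsilon}$ is exactly what the $\sqrt{b^{2}+4c}\le b+2\sqrt{c}$ step produces from the $n=1$ weight ${\rm e}^{4a(n+1)\varepsilon}={\rm e}^{8a\varepsilon}$. It is essential here to use the monotonicity of $(\tanh y)/y$ rather than the cruder $f(\tanh 2(n+1)\varepsilon)\le f(1)\le (\tanh\varepsilon)^{-s}f(\tanh\varepsilon)$, which would introduce a $(\tanh\varepsilon)^{-s}\sim \varepsilon^{-s}$ factor incompatible with the advertised $O(\delta^{-2})$ growth of $\tilde{m}(C,a,\delta,s)$.
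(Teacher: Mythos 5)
Your proof is correct and follows essentially the same route as the paper's: the same three elementary estimates ($\tanh ny\le n\tanh y$, $\cosh ny\ge\cosh^{n}y$, $n+1\le 2^{n}$) reduce the tail to a geometric series, and the resulting sufficient condition is exactly $m>m(C,a,\varepsilon,s)$. The only difference is bookkeeping: the paper divides by the right-hand side first and uses $d^{2n-1}\le d^{n}$ with $d={\rm e}^{2a\varepsilon}(\cosh\varepsilon)^{-m}$ to obtain the ratio $2^{s}d$ directly, whereas you keep the exact ratio $2^{s}d^{2}$ and close with a quadratic-root estimate in $X=(\cosh\varepsilon)^{m}$; both land on the identical threshold.
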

\begin{proof}
We may assume that $f(x) = x^{j}$ for $j=0,1,\ldots,s$.
Since
\begin{gather*}
1\leq \frac{\tanh nx}{\tanh x} \leq n,\
(\cosh x)^{n} \leq \cosh nx
\end{gather*}
for $x\in\mathbb{R}$, we have
\begin{align*}
\text{(LHS)/(RHS)} &=
C\sum_{n=1}^{\infty}{\rm e}^{4a(n+1)\varepsilon}\bigg(\frac{\cosh 2n
\varepsilon}{\cosh \varepsilon}\bigg)^{-m}\bigg(\frac{\tanh
2(n+1)\varepsilon}{\tanh \varepsilon}\bigg)^j \nonumber
\\
&\leq C{\rm e}^{6a\varepsilon}\sum_{n=1}^{\infty}
\big({\rm e}^{2a\varepsilon}(\cosh \varepsilon)^{-m}\big)^{2n-1}(2(n+1))^s.
\end{align*}
We set $d:={\rm e}^{2a\varepsilon}(\cosh \varepsilon)^{-m}$.
Then $d<1$ by $m>m(C,a,\varepsilon,s)$. Since $n+1\leq 2^{n}$ for all $n\in\mathbb{N}$,
we have
\begin{gather*}
\text{(LHS)/(RHS)}\leq
2^{s}C{\rm e}^{6a\varepsilon}\sum_{n=1}^{\infty} (2^{s}d)^n
= 2^{s}C{\rm e}^{6a\varepsilon}\frac{2^{s}d}{1-2^{s}d}.
\end{gather*}
Again by $m > m(C,a,\varepsilon,s)$,
we have $2^{s}d<\big(1+2^{s}C{\rm e}^{6a\varepsilon}\big)^{-1}$.
Therefore we obtain
\begin{gather*}
\text{(LHS)/(RHS)}<1.\tag*{\qed}
\end{gather*}
\renewcommand{\qed}{}
\end{proof}
Let $\chi\colon\{\pm 1\} \rightarrow \{0,1\}$ be
the map defined by $\chi(1) = 0$ and $\chi(-1) = 1$.
For $a = (a_j)_{j=0}^{k-1} \in \{\pm1\}^{k}$ and an odd integer $N\geq 3$,
we set
\begin{align*}%\label{theta}
\theta_{a,N} := \pi\sum_{i=0}^{k-1} (\chi(a_i) - \chi(a_{i-1}))N^{-i}.
\end{align*}
Here we use the convention $a_{-1}=1$.
\begin{Lemma}\label{sekoi}
For any $a=(a_{0},\ldots,a_{k-1})\in \{\pm1\}^{k}$ and any odd integer $N$,
we have
\begin{gather*}
a_j\cos\big(N^{j}\theta_{a,N}\big) > 0 \qquad \text{for}\quad j=0,1,\ldots,k-1.
\end{gather*}
\end{Lemma}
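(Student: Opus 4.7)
My plan is to exploit the digit-expansion flavour of the definition of $\theta_{a,N}$: for each $j$, I will split $N^{j}\theta_{a,N}/\pi$ into the ``integer part'' coming from indices $i\le j$ and a small ``fractional part'' coming from indices $i>j$, and use the oddness of $N$ to control the parity of the integer part, while $N\ge 3$ keeps the fractional part strictly smaller than $1/2$.

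Concretely, set $b_i := \chi(a_i)\in\{0,1\}$ (so $b_{-1}=0$) and write
\begin{gather*}
\frac{N^{j}\theta_{a,N}}{\pi} = I_j + F_j,\qquad
I_j := \sum_{i=0}^{j}(b_i-b_{i-1})N^{j-i},\qquad
F_j := \sum_{i=j+1}^{k-1}(b_i-b_{i-1})N^{j-i}.
\end{gather*}
The first step is an Abel summation (telescoping) on $I_j$, which rewrites it as
\begin{gather*}
I_j = b_j + (N-1)\sum_{i=0}^{j-1} b_i\,N^{j-i-1}.
\end{gather*}
Since $N$ is odd, $N-1$ is even, so $I_j\equiv b_j\pmod 2$, and hence $\cos(\pi I_j)=(-1)^{b_j}=a_j$ (checking both cases $a_j=\pm 1$ against the definition of $\chi$).

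The second step is to bound $|F_j|$. Using $|b_i-b_{i-1}|\le 1$ and a geometric series,
\begin{gather*}
|F_j|\le \sum_{i=j+1}^{k-1} N^{j-i} \le \sum_{\ell=1}^{\infty} N^{-\ell} = \frac{1}{N-1} \le \frac{1}{2},
\end{gather*}
with strict inequality whenever the sum is nonempty (and $F_j=0$ if $j=k-1$). Therefore $|\pi F_j|<\pi/2$, so $\cos(\pi F_j)>0$.

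Combining the two steps via $\cos(N^{j}\theta_{a,N}) = \cos(\pi I_j)\cos(\pi F_j) - \sin(\pi I_j)\sin(\pi F_j)$ and noting that $\sin(\pi I_j)=0$ since $I_j\in\mathbb{Z}$, one obtains $\cos(N^{j}\theta_{a,N}) = a_j\cos(\pi F_j)$, and multiplying by $a_j$ gives $a_j\cos(N^{j}\theta_{a,N}) = \cos(\pi F_j) > 0$, as required. I expect the only mildly delicate point to be stating the Abel summation cleanly so that the parity argument is transparent; the rest is a geometric-series estimate.
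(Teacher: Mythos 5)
Your proof is correct and takes essentially the same route as the paper's: your $\pi I_j$ is exactly $N^{j}\theta_{(a_0,\ldots,a_j),N}$ and your bound $|\pi F_j|<\pi/2$ is precisely the paper's estimate $\big|N^{j}\theta_{a,N}-N^{j}\theta_{(a_0,\ldots,a_j),N}\big|<\pi/2$. The only difference is that you spell out, via the Abel summation and the geometric series, the parity and tail computations that the paper leaves as ``easy to check.''
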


\begin{proof}
Since
$N^{k-1}\theta_{a,N} \equiv \pi \chi(a_{k-1}) \ (\mathrm{mod}\ 2\pi)$,
we have $\cos \big(N^{k-1} \theta_{a,N}\big) = a_{k-1}$.
It is easy to check that
$|N^{j} \theta_{a,N} - N^{j} \theta_{(a_0,\cdots,a_j),N}| < \pi/2$ for $j=0,1,\ldots,k-1$,
hence the signature of~$\cos (N^{j} \theta_{a,N})$ is equal to that of
$\cos (N^{j} \theta_{(a_0,\cdots,a_j),N}) = a_j$.
\end{proof}

\begin{Remark}
We have used the geometric progression $(N^{j})_{j=0}^{k-1}$ in Lemma~\ref{sekoi}.
On the other hand, an analogous statement does not hold if
we use arithmetic progressions. For example,
there does not exist $\theta \in \mathbb{R}$
satisfying $a_j\cos m_j\theta > 0$ for all $j=0,1,2,3,4$
if we choose $(a_j)_{j=0}^{4}=(1,1,1,-1,1)$ and
an arithmetic progression $(m_j)_{j=0}^{4}$.
\end{Remark}

For a discontinuous group $\Gamma$ and $k\in \mathbb{N}$,
one can take $m_{\Gamma}(k)$ in Proposition~\ref{Poincare_estimate} by
\begin{gather}
\label{Poincare_estimate}
m_{\Gamma}(k)=\inf_{(A,a)\in \mathrm{C_{exp}}(\Gamma)}
\max\big\{\tilde{m}\big(3^{k-1}A,a,\varepsilon_{\Gamma}/4,3^{k-1}\big)/2,a\big\},
\end{gather}
where $\mathrm{C_{exp}}(\Gamma):=\big\{(A,a)\in \mathbb{R}^{2} \mid
\forall x\in \mathrm{AdS}^{3},\forall R>0, N_{\Gamma}(x,R) < A{\rm e}^{aR}\big\}$.
Here,
we adopt the convention that $\inf_{\varnothing} f =\infty$ for a real-valued function $f$. In~particular,
$m_{\Gamma}(k)=\infty$ when $\mathrm{C_{exp}}(\Gamma)=\varnothing$
or $\varepsilon_{\Gamma}=0$.
\begin{proof}[Proof of Proposition~\ref{mainthm}]
By the exponential growth condition (\ref{C}), $\mathrm{C_{exp}}(\Gamma)\neq\varnothing$
and thus $m_{\Gamma}(k)<\infty$. We~take an integer $m >m_{\Gamma}(k)$.
Then there exist $\varepsilon$ with $0<\varepsilon<\varepsilon_{\Gamma}/4$
and $(A,a)\in\mathrm{C_{exp}}(\Gamma)$
satisfying the inequality
$m > \max\big\{m\big(3^{k-1}A,a,\varepsilon,3^{k-1}\big)/2,a\big\}$.

To see $\mathbb{C}$-linear independence of the real-valued functions
$\big\{(\operatorname{Re}(\psi_{m,3^{j}}))^{\Gamma}\big\}_{j=0}^{k-1}$,
it is enough to prove the non-vanishing of
the real part $\operatorname{Re}\big(\psi_{m,b}^{\Gamma}\big)
=(\operatorname{Re}(\psi_{m,b}))^{\Gamma}$ of
the generalized Poincar\'e series of a linear combination
\begin{gather*}
\psi_{m,b} := \sum_{j = 0}^{k-1} b_j\psi_{m,3^{j}}
\end{gather*}
for any
$b=(b_{0},b_{1},\ldots,b_{k-1}) \in \mathbb{R}^{k}\setminus\{0\}$.
By Lemma~\ref{inj-rad}, for $x\in B(4\varepsilon)$, we have
\begin{gather}
\label{separate}
\psi_{m,b}^{\Gamma}(\Gamma x)
= \psi_{m,b}(x) + \sum_{\substack{\gamma \in \Gamma \\
\|\gamma^{-1} x\| > 4\varepsilon}} \psi_{m,b}\big(\gamma^{-1} x\big).
\end{gather}
By (\ref{polarFJ}), for any $y \in \mathrm{AdS}^{3}$, we get
\begin{gather*}
|\psi_{m,b}(y)| \leq \bigg( \!\cosh \frac{\| y \|}{2} \bigg)^{-2m}
\sum_{j=0}^{k-1} |b_{j}|\bigg(\! \tanh \frac{\| y \|}{2} \bigg)^{3^{j}}.
\end{gather*}
We define
$a=(a_{j})_{j=0}^{k-1}$ by $a_{j}=1$ for $b_{j}\geq 0$ and
$a_{j}=-1$ for $b_{j}<0$,
and set
\begin{gather*}
%\label{pol-b}
f_{b}(u) := \sum_{j=0}^{k-1} b_{j}\cos\big(3^{j}\theta_{a,3}\big)u^{3^{j}}.
\end{gather*}
We note that all the coefficients of $f_{b}$ are non-negative by Lemma~\ref{sekoi}.
Moreover, we get
$\big| \cos \big(3^{j}\theta_{a,3}\big) \big|^{-1}
\leq 3^{k-1}$ for all $j=0,1,\ldots,k-1$
by using the inequality $\sin(\pi x/2)\geq x$ for~$0\leq x\leq1$.
Thus
\begin{gather*}
|\psi_{m,b}(y)| \leq 3^{k-1}\bigg(\!\cosh \frac{\| y \|}{2} \bigg)^{-2m}
f_{b}\bigg(\! \tanh \frac{\| y \|}{2} \bigg)
\end{gather*}
and, for any $x\in B(4\varepsilon)$, we have
\begin{align}
\bigg|\!\!\!\sum_{\substack{\gamma \in \Gamma \\ \|\gamma^{-1} x\| > 4\varepsilon}} \operatorname{Re}\big(\psi_{m,b}\big(\gamma^{-1} x\big)\big)\bigg|
&\leq \sum_{n = 1}^{\infty} \sum_{\substack{\gamma \in \Gamma \\
4\varepsilon n < \|\gamma^{-1} x\| \leq 4\varepsilon (n + 1) }}
|\psi_{m,b}(\gamma^{-1} x)| \nonumber \\
&\leq 3^{k-1}\sum_{n = 1}^{\infty} N_{\Gamma}(x, 4\varepsilon (n + 1))\left(\cosh
2\varepsilon n \right)^{-2m} f_{b}\left(\tanh 2\varepsilon (n+1)\right)
\nonumber \\
&\leq 3^{k-1}A\sum_{n = 1}^{\infty} {\rm e}^{4a\varepsilon (n+1)}\left(\cosh 2\varepsilon n
\right)^{-2m} f_{b}\left(\tanh 2\varepsilon (n+1)\right)
\nonumber \\
&< \left(\cosh \varepsilon\right)^{-2m} f_{b}
\left(\tanh \varepsilon\right).
\label{small}
\end{align}
The third and forth inequalities respectively follow from
the exponential growth condition (\ref{C}) and Lemma~\ref{koukou}.
On the other hand, we set
\begin{gather*}
%\label{non-zero-point}
x_{a,\varepsilon}:=
k\bigg(\frac{\theta_{a,3}}{2}\bigg)a(\varepsilon)k\bigg(\frac{\theta_{a,3}}{2}\bigg)^{-1} \in B(4\varepsilon).
\end{gather*}
Then it follows from (\ref{polarFJ}) that
\begin{align}
\label{large}
\operatorname{Re} \psi_{m,b}(x_{a,\varepsilon})
=\left(\cosh \varepsilon\right)^{-2m} f_{b}
\left(\tanh \varepsilon\right).
\end{align}
By (\ref{separate}), (\ref{small}), and (\ref{large}),
we obtain $(\operatorname{Re}(\psi_{m,b}))^{\Gamma}(\Gamma x_{a,\varepsilon})\neq 0$.
Hence we complete the proof
by the continuity of $\psi_{m,b}^{\Gamma}$ (Fact~\ref{Poincare}).
\end{proof}
%%%%%%%%%%%%%%%%%%%%%%%%%%%%%%%%%%%%%%%%%%%%%%%%%%
%Proof of Theorem~\ref{mthm2}
\section{Proof of Theorem~\ref{mthm2}}
\label{proof-thm2-section}
In this section, we prove Theorem~\ref{mthm2}
by applying Proposition~\ref{mainthm}. We~work in the following setting. We~allow $\Delta$ to have torsion.
\begin{Setting}\quad
%\label{notation}
\label{setting}
\begin{itemize}\itemsep=0pt
\item
$\Delta$ is a discrete subgroup of $\grave{\ }G = \mathrm{PSL}(2,\mathbb{R})$.
\item
$j, \rho\colon\Delta\rightarrow\grave{\ }G$ are two group homomorphisms
with $j$ injective and discrete.
\item
$\Delta^{j,\rho}$ is a discrete subgroup of $G=\grave{\ }G\times\grave{\ }G$
given by $\{(j(\gamma),\rho(\gamma)) \mid \gamma \in \Delta\}$.
\end{itemize}
\end{Setting}

We use the following structural results of discontinuous groups
for the proof of Theorem~\ref{mthm2}.
\begin{Fact}[{\cite[Lemma~9.2]{KaKob16}}]
\label{explicit}
Let $\Gamma$ be a finitely generated discrete subgroup of $G$
acting properly discontinuously on $\mathrm{AdS}^{3}$.
Then $\Gamma$ is of either type $(i)$ or $(ii)$ as follows:
\begin{enumerate}\itemsep=0pt
\setlength{\leftskip}{1.1cm}
\item[type\phantom{$i$}~$(i)$]
$\Gamma$ is of the form $\Delta^{j,\rho}$ up to switching the two factors,
\item[type~$(ii)$]
$\Gamma$ is contained in a conjugate of
$\grave{\ }G \times\grave{\ }K$ or
$\grave{\ }K\times\grave{\ }G$.
\end{enumerate}
\end{Fact}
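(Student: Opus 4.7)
The plan is to deduce this classification from the Kobayashi--Benoist properness criterion, specialized to the realization $\mathrm{AdS}^{3}\cong G/\mathrm{diag}\,\grave{\ }G$. Write $\mu\colon\grave{\ }G\to\mathbb{R}_{\geq 0}$ for the Cartan projection sending $k_{1}a(t)k_{2}$ to $t$ (so $\mu(x)=\|x\|/2$ in the notation of Section~\ref{Preliminary-AdS}), and let $p_{1},p_{2}\colon\Gamma\to\grave{\ }G$ denote the two coordinate projections. The criterion says that a discrete subgroup $\Gamma\subset G$ acts properly on $\mathrm{AdS}^{3}$ if and only if $\bigl|\mu(p_{1}(\gamma))-\mu(p_{2}(\gamma))\bigr|\to\infty$ as $\gamma\to\infty$ in $\Gamma$; this is already implicit in the estimate used to define $\varepsilon_{\Gamma}$ in~\eqref{varepsilon}.

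First I would dispose of type~(ii). If the closure of $p_{2}(\Gamma)$ in $\grave{\ }G=\mathrm{PSL}(2,\mathbb{R})$ is compact, then since every compact subgroup of $\mathrm{PSL}(2,\mathbb{R})$ is contained in a conjugate of the maximal compact $\grave{\ }K$, we conclude $\Gamma\subset\grave{\ }G\times g\grave{\ }Kg^{-1}$ for some $g\in\grave{\ }G$, which after conjugating in the second factor is type~(ii); the same argument with the roles of the two factors swapped gives the other alternative. From now on assume that both $p_{1}(\Gamma)$ and $p_{2}(\Gamma)$ have unbounded closure in $\grave{\ }G$; the goal is then to show that (possibly after swapping factors) $p_{1}|_{\Gamma}$ is injective with discrete image, which realizes $\Gamma$ as $\Delta^{j,\rho}$ with $\Delta:=p_{1}(\Gamma)$, $j$ the inclusion, and $\rho:=p_{2}\circ(p_{1}|_{\Gamma})^{-1}$.

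The heart of the argument is to show $N_{1}:=\ker(p_{1}|_{\Gamma})=\{E\}$. Suppose instead that $(\grave{\ }E,g_{0})\in\Gamma$ with $g_{0}\neq\grave{\ }E$. Because $N_{1}$ is normal in $\Gamma$, the group $p_{2}(\Gamma)$ normalizes $\langle g_{0}\rangle$ inside $\grave{\ }G$. I would split into three cases according to the Jordan type of $g_{0}$. In the elliptic case, $\mu(g_{0}^{k})$ stays bounded, so for any fixed $(\gamma_{1},\gamma_{2})\in\Gamma$ with $\mu(\gamma_{1})$ large the infinite family $\{(\gamma_{1},g_{0}^{k}\gamma_{2})\}_{k\in\mathbb{Z}}\subset\Gamma$ has $\bigl|\mu(\gamma_{1})-\mu(g_{0}^{k}\gamma_{2})\bigr|$ uniformly bounded, contradicting the properness criterion. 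In the hyperbolic or parabolic case, the normalizer of $\langle g_{0}\rangle$ in $\mathrm{PSL}(2,\mathbb{R})$ is a $1$-dimensional Lie subgroup (the hyperbolic torus extended by its Weyl involution, or the associated Borel, respectively), so the assumption that $p_{2}(\Gamma)$ normalizes $\langle g_{0}\rangle$ confines $p_{2}(\Gamma)$ to this $1$-dimensional group; here one chooses $(\gamma_{1},\gamma_{2})\in\Gamma$ with $\mu(\gamma_{1})$ large and combines it with a large power $(\grave{\ }E,g_{0}^{k})$ calibrated so that $\mu(g_{0}^{k}\gamma_{2})$ tracks $\mu(\gamma_{1})$, again violating divergence of the Cartan projections.

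Once $N_{1}=\{E\}$, a similar but shorter argument gives discreteness of $p_{1}(\Gamma)$ in $\grave{\ }G$: a sequence $\gamma^{(n)}=(\gamma_{1}^{(n)},\gamma_{2}^{(n)})\in\Gamma\setminus\{E\}$ with $\gamma_{1}^{(n)}\to\grave{\ }E$ has $\mu(\gamma_{2}^{(n)})\to\infty$ by discreteness of $\Gamma$ in $G$, and multiplying by a fixed element of $\Gamma$ with large $\mu\circ p_{1}$ produces an infinite family on which the Kobayashi criterion fails. Then $\rho=p_{2}\circ(p_{1}|_{\Gamma})^{-1}$ is a well-defined homomorphism on the discrete subgroup $\Delta=p_{1}(\Gamma)$, completing type~(i). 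I expect the hyperbolic/parabolic subcase of the injectivity step to be the main obstacle: there $N_{1}$ does not by itself violate the criterion (since $\mu(g_{0}^{k})\to\infty$), and one really has to combine the $1$-dimensional structure of the normalizer of $\langle g_{0}\rangle$ in $\mathrm{PSL}(2,\mathbb{R})$ with the standing hypothesis that $p_{2}(\Gamma)$ is unbounded to force a contradiction.
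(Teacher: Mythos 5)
First, a point of comparison: the paper does not prove Fact~\ref{explicit} at all --- it is imported verbatim from \cite[Lemma~9.2]{KaKob16} --- so your argument has to stand on its own, and it does not. The decisive flaw is structural. After disposing of type~(ii) you assume both $p_1(\Gamma)$ and $p_2(\Gamma)$ are unbounded and then set out to prove that $p_1|_{\Gamma}$ itself is injective with discrete image, deriving contradictions from $N_1:=\ker(p_1|_{\Gamma})\neq\{E\}$ and from non-discreteness of $p_1(\Gamma)$. But the Fact only claims that \emph{one} of the two projections, after possibly switching factors, is injective and discrete, and the stronger statement you aim at is false. Take $\Delta=F_2=\langle\alpha,\beta\rangle$, let $j\colon\Delta\to\grave{\ }G$ be a Schottky embedding, so that $\|j(\delta)\|\geq c|\delta|-C$ for some $c,C>0$ and the word length $|\delta|$, and let $\rho\colon\Delta\to\grave{\ }G$ factor through $\phi\colon\Delta\to\mathbb{Z}$, $\alpha\mapsto1$, $\beta\mapsto0$, sending $1\in\mathbb{Z}$ to $a(s)$ with $0<2s<c$. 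Then $\|\rho(\delta)\|=2s|\phi(\delta)|\leq 2s|\delta|$, so $\big|\|j(\delta)\|-\|\rho(\delta)\|\big|\geq(c-2s)|\delta|-C\to\infty$: by your own properness criterion, $\Gamma=\{(\rho(\delta),j(\delta))\colon\delta\in\Delta\}$ acts properly discontinuously on $\mathrm{AdS}^{3}$. Both projections are unbounded, yet $\ker(p_1|_{\Gamma})=\{(\grave{\ }E,j(\delta))\colon\delta\in\ker\phi\}$ is infinite (and non-elementary, since $\ker\phi$ contains the rank-two free group $\langle\beta,\alpha\beta\alpha^{-1}\rangle$). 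So no correct argument can rule out $N_1\neq\{E\}$ in your standing situation; a proof must \emph{decide which} factor works, and this is exactly where the finite-generation hypothesis --- which you never use, although the statement requires it --- and the global structure theory of \cite{KuRa85} and \cite{KaKob16} enter.

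Consistently with this, each of your contradiction arguments breaks. In the elliptic case, discreteness of $\Gamma$ forces $g_0$ to have finite order (an infinite-order elliptic element generates a non-discrete subgroup of $\grave{\ }G$), so your family $\{(\gamma_1,g_0^{k}\gamma_2)\}_{k}$ is finite and nothing is contradicted. In the hyperbolic/parabolic case, normality of $N_1$ gives only that $p_2(\Gamma)$ normalizes $p_2(N_1)$, \emph{not} $\langle g_0\rangle$; in the example above $p_2(N_1)=j(\ker\phi)$ is a non-elementary normal subgroup whose normalizer contains the unbounded group $j(\Delta)$, so it is nowhere near one-dimensional and $p_2(\Gamma)$ is not confined. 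In your discreteness step, the elements you produce have $\|p_1(\cdot)\|$ bounded and $\|p_2(\cdot)\|\to\infty$, so the difference of Cartan projections \emph{diverges}: the criterion is not violated; and indeed properly discontinuous $\Gamma$ with $p_1(\Gamma)$ unbounded and non-discrete do exist (replace $\phi$ above by $\alpha\mapsto a(s)$, $\beta\mapsto a(t)$ with $s/t\notin\mathbb{Q}$ and $s,t$ small). In short, producing coset-by-coset violations of the Kobayashi--Benoist criterion cannot prove this Fact; the known proof is of a genuinely different, global nature.
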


A non-elementary discrete subgroup $\Gamma$ of
a connected linear real reductive Lie group $L$ of real rank $1$
is called \textit{convex cocompact}
if $\Gamma$ acts cocompactly on the convex hull of
its limit set in the Riemannian symmetric space associated to $L$.
For example, cocompact lattices and Schottky groups are convex cocompact.
More generally, one may think of the notion of convex cocompactness of
discontinuous groups for $\mathrm{AdS}^{3}$:
\begin{Definition}[{\cite[Definition~9.1]{KaKob16}}]
\label{convex_cocompact}
A discontinuous group $\Gamma$ for $\mathrm{AdS}^{3}$ is called convex cocom\-pact
if $\Gamma$ is of the form $\Delta^{j,\rho}$
up to finite index and switching the two factors,
where $\Delta$ is torsion-free and
$j(\Delta)$ is convex cocompact in $\grave{\ }G$.
\end{Definition}

We note that
a discontinuous group $\Delta^{j,\rho}$ acts cocompactly on $\mathrm{AdS}^{3}$
if and only if
$j(\Delta)$ is cocompact in $\grave{\ }G$
because $\Delta^{j,\rho}$ is isomorphic to $j(\Delta)$ as abstract groups.
By Fact~\ref{explicit}, discontinuous groups acting cocompactly on $\mathrm{AdS}^{3}$
are convex cocompact.

%%%%%%%%%%%%%%%%%%%%%%%%%%%%%%%%%%%%%%%%%%%%%%%%%%%%%%%%%%%%%%%%
%Proof of Theorem~\ref{mthm2} for the case 1
\subsection[Proof of Theorem~1.4 for Gamma of type (i)]
{Proof of Theorem~\ref{mthm2} for $\boldsymbol\Gamma$ of type ($\boldsymbol i$)}

In this subsection, we prove Theorem~\ref{mthm2} for $\Gamma$ of type $(i)$.
For this, we use the constant $C_{\rm Lip}(j,\rho)$
introduced by Kassel~\cite{Ka09} and Gu\'eritaud--Kassel~\cite{GuKa17},
which quantifies the properness of the action
of $\Delta^{j,\rho}$ on $\mathrm{AdS}^{3}$.

\begin{Definition}
\label{def-lip}
Let $d_{\mathbb{H}^{2}}$ be the hyperbolic distance of
the $2$-dimensional hyperbolic space \mbox{$\mathbb{H}^{2}(\cong\grave{\ }G/\grave{\ }K)$}. In~Setting~\ref{setting},
we denote by $C_{\rm Lip}(j,\rho)$ the infimum of Lipschitz constants
\begin{gather*}
\operatorname{Lip}(f)=\sup_{y\neq y'}\frac{d_{\mathbb{H}^{2}}(f(y),f(y'))}{d_{\mathbb{H}^{2}}(y,y')}
\end{gather*}
of maps $f\colon \mathbb{H}^{2} \rightarrow \mathbb{H}^{2}$
that are $(j,\rho)$-equivariant.
\end{Definition}
The map $(j,\rho) \mapsto C_{\rm Lip}(j,\rho)$
is continuous over the set of
$(j,\rho)\in\operatorname{Hom}(\Delta,\grave{\ }G)^{2}$
such that $j$ is injective and $j(\Delta)$ is convex cocompact in $\grave{\ }G$
\cite[Proposition~1.5]{GuKa17}.
\begin{Fact}[\cite{GuKa17,Ka09}]
\label{proper}
Assume that $\Delta$ is finitely generated.
Then the action of $\Delta^{j,\rho}$ on $\mathrm{AdS}^{3}$ is pro\-perly discontinuous
if and only if $\operatorname{min}\big\{C_{\rm Lip}(j,\rho),C_{\rm Lip}(\rho,j)\big\} < 1$.
\end{Fact}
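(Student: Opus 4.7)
The plan is to translate both sides of the equivalence into statements about the Cartan projection $\mu\colon\grave{\ }G\to[0,\infty)$, normalized so that $2\mu(g)=d_{\mathbb{H}^{2}}(o,go)+O(1)$ with $o\in\mathbb{H}^{2}$ a basepoint fixed by $\grave{\ }K$. Via the isomorphism $\mathrm{AdS}^{3}\cong\grave{\ }G$ with action $(j(\gamma),\rho(\gamma))\cdot x=j(\gamma)x\rho(\gamma)^{-1}$, the Kobayashi--Benoist properness criterion for reductive actions specializes to: the action of $\Delta^{j,\rho}$ is properly discontinuous iff the map $\gamma\mapsto|\mu(j(\gamma))-\mu(\rho(\gamma))|$ is proper on $\Delta$. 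In parallel, $C_{\rm Lip}(j,\rho)$ is designed to quantify the asymptotic ratio $\mu(\rho(\gamma))/\mu(j(\gamma))$, so the condition $\min\{C_{\rm Lip}(j,\rho),C_{\rm Lip}(\rho,j)\}<1$ should encode a uniform linear drift, rather than merely an additive drift, between the two Cartan projections in one of the two directions.

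For the ($\Leftarrow$) direction I would argue directly. Assume $C_{\rm Lip}(j,\rho)=C<1$ and pick a $(j,\rho)$-equivariant map $f\colon\mathbb{H}^{2}\to\mathbb{H}^{2}$ with $\operatorname{Lip}(f)$ close to $C$. Writing $d=d_{\mathbb{H}^{2}}$ and using equivariance, for any $\gamma\in\Delta$ the identity $d(f(o),\rho(\gamma)f(o))=d(f(o),f(j(\gamma)o))\leq C\,d(o,j(\gamma)o)$ together with the triangle inequality yields $d(o,\rho(\gamma)o)\leq 2d(o,f(o))+C\,d(o,j(\gamma)o)$. Thus $\mu(\rho(\gamma))\leq C\mu(j(\gamma))+O(1)$ and so $\mu(j(\gamma))-\mu(\rho(\gamma))\geq(1-C)\mu(j(\gamma))-O(1)$. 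Since $j$ is injective with discrete image, $\mu(j(\gamma))\to\infty$ as $\gamma\to\infty$ in $\Delta$, and the Kobayashi--Benoist criterion delivers properness. The symmetric case $C_{\rm Lip}(\rho,j)<1$ is identical after swapping the two factors of $\grave{\ }G\times\grave{\ }G$.

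The ($\Rightarrow$) direction is where I expect the main obstacle. From properness one only recovers the coarse statement $|\mu(j(\gamma))-\mu(\rho(\gamma))|\to\infty$ on $\Delta$, and this additive gap must be upgraded to the existence of an honest $(j,\rho)$- or $(\rho,j)$-equivariant Lipschitz map with constant strictly below $1$. My plan is to use finite generation of $\Delta$ to reduce the best equivariant Lipschitz constant to a minimax over displacements along a finite generating set, and then to realize the optimum geometrically in the spirit of Thurston's stretch maps on Teichm\"uller space: along the ``stretch locus'' of an optimal equivariant map, the displacement ratio $d(o,\rho(\gamma)o)/d(o,j(\gamma)o)$ is maximized, and an equivariant Kirszbraun--Valentine-type extension in $\mathbb{H}^{2}$ produces a global map attaining the bound. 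The delicate point, which explains the asymmetric $\min$, is to rule out the borderline case where both $C_{\rm Lip}(j,\rho)$ and $C_{\rm Lip}(\rho,j)$ equal $1$: in that situation the two stretch loci would conspire to keep $\mu(j(\gamma))-\mu(\rho(\gamma))$ bounded along a subsequence, contradicting properness; conversely if exactly one of them is $<1$ the above construction applies to the corresponding pair.
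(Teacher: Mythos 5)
The paper does not prove Fact~\ref{proper}: it is quoted verbatim from Gu\'eritaud--Kassel~\cite{GuKa17} and Kassel's thesis~\cite{Ka09}, so there is no internal argument to compare yours against, and your proposal has to stand on its own. Your ($\Leftarrow$) direction does stand: taking a $(j,\rho)$-equivariant $f$ with $\operatorname{Lip}(f)<1$, equivariance and the triangle inequality give $\|\rho(\gamma)\|\leq\operatorname{Lip}(f)\,\|j(\gamma)\|+2d_{\mathbb{H}^{2}}\big(\sqrt{-1},f\big(\sqrt{-1}\big)\big)$, hence $\big|\|j(\gamma)\|-\|\rho(\gamma)\|\big|\to\infty$ because $j$ is injective and discrete, and the Kobayashi--Benoist criterion for $G=\grave{\ }G\times\grave{\ }G$ acting on $\mathrm{AdS}^{3}\cong G/\operatorname{diag}\grave{\ }G$ does indeed reduce to properness of $\gamma\mapsto\big|\|j(\gamma)\|-\|\rho(\gamma)\|\big|$. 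This is the standard argument, and it is exactly the computation the paper itself reruns inside the proof of Theorem~\ref{non-standard} to obtain the estimate \eqref{rho-est}.

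The ($\Rightarrow$) direction, however, is not a proof but a plan, and the plan has a genuine gap at its central step. What must be shown is the contrapositive: if $C_{\rm Lip}(j,\rho)\geq1$ and $C_{\rm Lip}(\rho,j)\geq1$, then there is a sequence $\gamma_{n}\to\infty$ in $\Delta$ along which $\big|\|j(\gamma_{n})\|-\|\rho(\gamma_{n})\|\big|$ stays bounded. Neither of your two key assertions is justified, and the first is false as stated: the optimal equivariant Lipschitz constant is \emph{not} a minimax of displacements over a finite generating set. The actual mechanism (Gu\'eritaud--Kassel) is that when $C_{\rm Lip}(j,\rho)\geq1$ and $j(\Delta)$ is geometrically finite, $C_{\rm Lip}(j,\rho)$ equals the supremum over $\gamma$ of infinite order of the ratio of translation lengths $\lambda(\rho(\gamma))/\lambda(j(\gamma))$, this supremum being approached along a geodesic lamination contained in the stretch locus of an optimal map. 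Proving that identity --- existence of optimal equivariant maps, the structure theory of their stretch loci, the equivariant Kirszbraun--Valentine extension with control of where the maximal stretch is attained, plus the degenerate cases where $\rho$ has a fixed point or elementary image and the cusped case when $\Delta$ has parabolics --- is the main content of~\cite{GuKa17} and occupies most of that long paper. The sentence that the two stretch loci ``would conspire'' to bound $\|j(\gamma)\|-\|\rho(\gamma)\|$ along a subsequence is the entire theorem restated, not an argument for it. So the hard implication remains unproved; the honest course here is to do what the paper does and cite it as a black box.
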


\begin{Remark}
In the setting of Fact~\ref{proper},
if $C_{\rm Lip}(\rho,j)<1$, then $\rho$ is injective and discrete.
Moreover, if $j(\Delta)$ is convex cocompact, then so is $\rho(\Delta)$.
\end{Remark}
Therefore,
Theorem~\ref{mthm2} for $\Gamma$ of type $(i)$
reduces to the following:
\begin{Theorem}
\label{non-standard}
In Setting~$\ref{setting}$, we assume that
$\Delta$ is finitely generated and that $C_{\rm Lip}(j,\rho) < 1$.
Then there exists a constant $\mu_{1}>0$ independent of $j,\rho$ and $\Delta$ such that
for any $m,k\in\mathbb{N}$ with $m > 3^{k}\mu_{1}(1-C_{\rm Lip}(j,\rho))^{-2}$,
\begin{gather*}
\mathcal{N}_{\Delta^{j,\rho}\backslash\mathrm{AdS}^{3}}(\lambda_{m}) \geq k.
\end{gather*}
\end{Theorem}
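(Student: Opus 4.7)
The approach is to apply Proposition~\ref{mainthm} to $\Gamma := \Delta^{j,\rho}$ and to make the resulting threshold $m_\Gamma(k)$ in \eqref{Poincare_estimate} quantitative in terms of $C_{\rm Lip}(j,\rho)$. Since the multiplicity $\mathcal N_{\Gamma\backslash \mathrm{AdS}^3}(\lambda_m)$ is invariant under $G$-conjugation, by Proposition~\ref{strong} we may replace $\Gamma$ by a conjugate $g^{-1}\Gamma g$ to ensure $\varepsilon_{\Gamma}>0$; such a conjugation only changes $(j,\rho)$ by independent conjugation of each factor, and in particular preserves $C_{\rm Lip}(j,\rho)$. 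Thus it suffices to produce universal constants $A_{0},a_{0},c_{0}>0$ such that, after conjugation,
\begin{gather*}
\varepsilon_\Gamma \;\ge\; c_0\bigl(1-C_{\rm Lip}(j,\rho)\bigr),\qquad
N_\Gamma(x,R)\;\le\;A_0\exp\!\bigl(a_0(1-C_{\rm Lip}(j,\rho))^{-1}R\bigr).
\end{gather*}

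The key analytic input, which is Step~1, is the \emph{sharpness estimate} provided by Kassel~\cite{Ka09} and Gu\'eritaud--Kassel~\cite{GuKa17} underlying Fact~\ref{proper}: when $C_{\rm Lip}(j,\rho)<1$, one has $\bigl|\|j(\gamma)\|-\|\rho(\gamma)\|\bigr|\ge(1-C_{\rm Lip}(j,\rho))\max(\|j(\gamma)\|,\|\rho(\gamma)\|)-C$ for all $\gamma\in\Delta$, with a bounded defect $C$. After conjugating each factor of $G=\grave{\ }G\times\grave{\ }G$ appropriately, the Baire-category argument of Lemma~\ref{weak} kills the defect on the (finitely many) short elements while preserving the linear lower bound on long elements, yielding the claimed lower bound on $\varepsilon_\Gamma$.

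Step~2 is the counting bound: sharpness converts $N_\Gamma(x,R)$ into a count of $j(\Delta)$-orbits in $\mathbb{H}^{2}$ of hyperbolic radius at most $R/(1-C_{\rm Lip}(j,\rho))$ up to bounded error, and since $j(\Delta)\subset\mathrm{PSL}(2,\mathbb{R})$ has orbit growth governed by a critical exponent $\le 1$ (bounded universally), the stated exponential bound follows. Combining both bounds and substituting into \eqref{Poincare_estimate}, using $\log\cosh\varepsilon\sim\varepsilon^{2}/2$ as $\varepsilon\to 0$ together with the asymptotics $\tilde m(C,a,\delta,s)=O(\delta^{-2})$ stated after its definition, an optimization over $\varepsilon<\varepsilon_\Gamma/4$ gives
\begin{gather*}
m_\Gamma(k)\;\le\;\mu_1\,3^{k}\,(1-C_{\rm Lip}(j,\rho))^{-2}
\end{gather*}
for a universal $\mu_1>0$, where the factor $3^{k}$ absorbs the $s=3^{k-1}$ dependence through the terms $(\log 2)s$ and $\log(1+2^{s}C\mathrm e^{6a\varepsilon})$. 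Proposition~\ref{mainthm} then furnishes $k$ linearly independent elements of $L^{2}_{\lambda_m}(\Gamma\backslash \mathrm{AdS}^{3})$, proving the theorem.

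The main obstacle is Step~1: extracting from the Kassel--Gu\'eritaud-Kassel theory not merely sharpness of $\Delta^{j,\rho}$ but a linear lower bound with slope proportional to $1-C_{\rm Lip}(j,\rho)$, and then arranging via conjugation that $\varepsilon_\Gamma$ itself (which involves an infimum over all of $\Gamma\setminus\{E\}$, including possible torsion elements of $\Delta$ on which $\|j(\gamma)\|=\|\rho(\gamma)\|=0$) inherits this linear control uniformly in $(j,\rho,\Delta)$.
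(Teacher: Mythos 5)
Your overall reduction is the right one --- conjugate $\Gamma=\Delta^{j,\rho}$, establish $\varepsilon_\Gamma\gtrsim 1-C_{\rm Lip}(j,\rho)$ and $N_\Gamma(x,R)\le A_0{\rm e}^{a_0(1-C_{\rm Lip}(j,\rho))^{-1}R}$ with universal constants, and feed these into the explicit formula \eqref{Poincare_estimate} using $\log\cosh\varepsilon\sim\varepsilon^2/2$ --- and this is exactly the architecture of the paper's proof. But your Step~1 has a genuine gap: the mechanism you propose for the lower bound on $\varepsilon_\Gamma$ cannot deliver a \emph{uniform} constant. Lemma~\ref{weak} (the Baire-category argument) only produces a conjugate with $\|\gamma_1\|\neq\|\gamma_2\|$ for all $\gamma\neq E$; it is purely qualitative. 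Even restricted to finitely many ``short'' elements it yields some positive gap, but one depending on the particular $(\Delta,j,\rho)$, whereas the theorem requires $\mu_1$ independent of these data. The same objection applies to the torsion elements you flag at the end: you correctly identify them as the main obstacle but offer no device that controls them uniformly. Likewise, the constant in your orbit-counting bound secretly depends on $\#(\Gamma\cap K)$ (cf.\ Fact~\ref{alphacounting}), which you have not bounded.

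The paper closes this gap with two ingredients you are missing. First, the Kazhdan--Margulis-type statement (Fact~\ref{Kazhdan-Margulis}): after conjugating $j$ by some $\grave{\ }g\in\grave{\ }G$ one has $\|j(\gamma)\|\geq r$ for \emph{all} $\gamma\in\Delta\setminus\{\grave{\ }E\}$, with $r>0$ universal; this eliminates short and torsion elements in one stroke and forces $\Gamma\cap K=\{E\}$. Second, there is no ``bounded defect'' to kill: choosing a $(j,\rho)$-equivariant map $f_\delta$ with $\operatorname{Lip}(f_\delta)<\alpha:=C_{\rm Lip}(j,\rho)+\delta<1$ and conjugating $\rho$ by $g_\delta$ with $g_\delta\sqrt{-1}=f_\delta\big(\sqrt{-1}\big)$ gives the clean inequality $\|\rho(\gamma)\|\leq\alpha\|j(\gamma)\|$ for every $\gamma\in\Delta$ (the defect in your sharpness estimate is precisely the basepoint displacement, which this conjugation sets to zero). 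Combining the two yields $3\varepsilon_\Gamma\geq(1-\alpha)\inf_{\gamma\neq\grave{\ }E}\|j(\gamma)\|\geq r(1-\alpha)$ and, via Fact~\ref{alphacounting}, $N_\Gamma(x,R)\leq c\,{\rm e}^{8R(1-\alpha)^{-1}}$ with universal constants, after which your optimization in \eqref{Poincare_estimate} goes through. Without these two steps the claimed independence of $\mu_1$ from $j$, $\rho$, $\Delta$ is not established.
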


For the proof of Theorem~\ref{non-standard}, we need two results
from Kassel--Kobayashi~\cite{KaKob16} applied to our setting
$G=\grave{\ }G\times\grave{\ }G$.
If a discontinuous group
$\Gamma$ satisfies the assumption of Fact~\ref{alphacounting} below,
then it is $((1-\alpha)/2,0)$-sharp
in the sense of \cite[Definition~4.2]{KaKob16}.
Hence we get the following by~applying \cite[Lemma~4.6.4]{KaKob16}:
\begin{Fact}[\cite{KaKob16}]
\label{alphacounting}
Let $\Gamma \subset G$ be a discontinuous group for $\mathrm{AdS}^{3}$. We~assume that there exists $0\leq\alpha<1$ such that
$\|\gamma_2\|\leq\alpha\|\gamma_1\|$ or $\|\gamma_1\|\leq\alpha\|\gamma_2\|$
for any $(\gamma_1,\gamma_2)\in\Gamma$.
Then there exists $c>0$ independent of $\alpha$ and $\Gamma$
such that for any $x\in\mathrm{AdS}^3$ and any $R>0$,
\begin{gather*}
N_{\Gamma}(x,R)\leq \#(\Gamma\cap K)c{\rm e}^{8R(1-\alpha)^{-1}}.
\end{gather*}
\end{Fact}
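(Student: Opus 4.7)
My plan is to follow the strategy sketched in the paragraph preceding the statement: verify that the hypothesis forces $\Gamma$ to be $((1-\alpha)/2,0)$-sharp in the sense of \cite[Definition~4.2]{KaKob16}, and then invoke the orbit-counting estimate \cite[Lemma~4.6.4]{KaKob16} for sharp discontinuous groups. No new analytic input is needed beyond these two ingredients; the task is to translate the hypothesis into the correct sharpness constants and track normalizations.

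First, I would verify sharpness directly. The alternative $\|\gamma_2\|\leq\alpha\|\gamma_1\|$ or $\|\gamma_1\|\leq\alpha\|\gamma_2\|$ is equivalent to $\min(\|\gamma_1\|,\|\gamma_2\|)\leq\alpha\max(\|\gamma_1\|,\|\gamma_2\|)$. Subtracting and using $\max(\|\gamma_1\|,\|\gamma_2\|)\geq(\|\gamma_1\|+\|\gamma_2\|)/2$ yields
\begin{gather*}
\bigl|\|\gamma_1\|-\|\gamma_2\|\bigr|\;\geq\;(1-\alpha)\max(\|\gamma_1\|,\|\gamma_2\|)\;\geq\;\tfrac{1-\alpha}{2}\bigl(\|\gamma_1\|+\|\gamma_2\|\bigr).
\end{gather*}
Via the identification of $\|\cdot\|$ with twice a Cartan projection on $\grave{\ }G$ (since $\|k(\theta_1)a(t)k(\theta_2)\|=2t$ from Section~\ref{preliminary-spherical}), this is precisely the $((1-\alpha)/2,0)$-sharpness condition for the discrete subgroup $\Gamma$ of $G=\grave{\ }G\times\grave{\ }G$.

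Second, I would feed this into \cite[Lemma~4.6.4]{KaKob16}, which converts $(c_0,C_0)$-sharpness into an orbit-counting bound whose exponential rate scales like $1/c_0$. Plugging in $c_0=(1-\alpha)/2$ together with the factor $2$ from the normalization above produces the exponent $8R/(1-\alpha)$. The multiplicative factor $\#(\Gamma\cap K)$ appears because the sharpness inequality is trivially saturated on $\Gamma\cap K$ (where $\|\gamma_1\|=\|\gamma_2\|=0$), so these elements contribute to the count without being controlled by the exponential rate and must be inserted separately as a compact-part contribution. The uniformity of the constant $c$ (independent of $\alpha$ and $\Gamma$) is built into the cited lemma.

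The main obstacle is purely bookkeeping: one has to match the norm conventions used here with the Cartan-projection-based framework in \cite{KaKob16}, and check that the exponent $8/(1-\alpha)$ emerges correctly from $c_0=(1-\alpha)/2$ after accounting for the factor-of-two normalization between $\|\cdot\|$ and the Cartan projection. Beyond this verification there is no further geometric or dynamical argument required.
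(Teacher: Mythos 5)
Your proposal is correct and follows essentially the same route as the paper: the paper's own justification (in the paragraph preceding the Fact) is precisely that the hypothesis makes $\Gamma$ a $((1-\alpha)/2,0)$-sharp discontinuous group in the sense of \cite[Definition~4.2]{KaKob16}, after which the counting bound is obtained by applying \cite[Lemma~4.6.4]{KaKob16}. Your explicit verification of the sharpness inequality $\bigl|\|\gamma_1\|-\|\gamma_2\|\bigr|\geq\tfrac{1-\alpha}{2}\bigl(\|\gamma_1\|+\|\gamma_2\|\bigr)$ and the normalization bookkeeping simply fill in details the paper leaves implicit.
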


The following theorem
traces back to the Kazhdan--Margulis theorem for discrete subgroups of
semisimple groups.
\begin{Fact}[{\cite[Proposition~8.14]{KaKob16}}]
\label{Kazhdan-Margulis}
There exists a constant $r>0$ satisfying the following property:
for any discrete subgroup $\grave{\ }\Gamma$ of $\grave{\ }G$,
there exists $\grave{\ }g \in \grave{\ }G$ such that
$\|\grave{\ }\gamma\|\geq r$ for all
$\grave{\ }\gamma\in
\grave{\ }g^{-1}\grave{\ }\Gamma \grave{\ }g\setminus\{\grave{\ }E\}$.
\end{Fact}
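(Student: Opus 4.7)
This is the Kazhdan--Margulis theorem (Margulis lemma) specialized to $\grave{\ }G=\mathrm{PSL}(2,\mathbb{R})$, recast in the language of the pseudo-norm $\|\cdot\|$. I~would translate the statement into hyperbolic geometry and then invoke a Zassenhaus-type argument.

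First, I~would identify $\grave{\ }G/\grave{\ }K$ with the hyperbolic plane $\mathbb{H}^2$. Using the Cartan decomposition $g=k(\theta_1)a(t)k(\theta_2)$, a direct computation (e.g., $a(t)\cdot i={\rm e}^{2t}i$ in the upper half-plane model) shows $\|g\|=2t=d_{\mathbb{H}^2}(o,g\cdot o)$, where $o=\grave{\ }K$. Since conjugation $\gamma\mapsto\grave{\ }g^{-1}\gamma\grave{\ }g$ corresponds geometrically to changing the base point to $p:=\grave{\ }g\cdot o$, Fact~\ref{Kazhdan-Margulis} reduces to: there exists a universal $r>0$ such that for every discrete subgroup $\grave{\ }\Gamma\subset\grave{\ }G$, some $p\in\mathbb{H}^2$ satisfies $d_{\mathbb{H}^2}(p,\gamma p)\geq r$ for all $\gamma\in\grave{\ }\Gamma\setminus\{\grave{\ }E\}$. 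Equivalently, the orbifold $\grave{\ }\Gamma\backslash\mathbb{H}^2$ always contains a point of injectivity radius at least $r/2$, with $r$ independent of $\grave{\ }\Gamma$.

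Next, I~would apply the Zassenhaus lemma to obtain a symmetric neighborhood $U$ of $\grave{\ }E$ in $\grave{\ }G$ such that $\langle\grave{\ }\Gamma\cap U\rangle$ is nilpotent for every discrete $\grave{\ }\Gamma$. Because $\mathrm{PSL}(2,\mathbb{R})$ has no non-abelian discrete nilpotent subgroups, this group is abelian, hence elementary: contained in the stabilizer of a point in $\mathbb{H}^2$ (finite cyclic elliptic), of a geodesic axis (cyclic hyperbolic), or of a point on $\partial\mathbb{H}^2$ (cyclic parabolic). In each case I~would choose $\grave{\ }g$ so that $p=\grave{\ }g\cdot o$ escapes the corresponding thin region---far from the elliptic fixed point, far from the axis, or outside a sufficiently small cuspidal horoball---so that every nontrivial element of $\langle\grave{\ }\Gamma\cap U\rangle$ moves $p$ by at least some universal $r_1>0$. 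Elements $\gamma\in\grave{\ }\Gamma\setminus U$ are already bounded below in norm by some $r_2>0$ depending only on $U$, and one takes $r:=\min(r_1,r_2)$.

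The main obstacle is the coordination in the last step: the conjugation chosen to push the near-identity abelian part out of $B(r)$ should not simultaneously drag other elements of~$\grave{\ }\Gamma$ back inside. This is precisely what the thick--thin decomposition of $\grave{\ }\Gamma\backslash\mathbb{H}^2$ supplies: for the Margulis constant~$r$, the $r$-thin part is a disjoint union of standard pieces (cusp neighborhoods, tubes about closed geodesics, cone-point disks), each associated with a maximal elementary subgroup, while the complementary thick part is nonempty (and if $\grave{\ }\Gamma$ is itself elementary, one picks $p$ inside the relevant piece, away from its~``core''). Specialized to rank one, this is the content of the Kazhdan--Margulis theorem and yields the required universal~$r$; for $\mathrm{PSL}(2,\mathbb{R})$ the Zassenhaus ingredient may alternatively be replaced by an explicit trace inequality to give an effective constant.
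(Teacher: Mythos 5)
The paper itself contains no proof of Fact~\ref{Kazhdan-Margulis}: it is imported verbatim from \cite[Proposition~8.14]{KaKob16}, so your sketch can only be measured against the standard argument in the literature, not against anything in this paper. Your opening reduction is correct, and it is exactly the dictionary the paper records just before the proof of Theorem~\ref{non-standard}: $\|\grave{\ }g\|=d_{\mathbb{H}^{2}}\big(\grave{\ }g\sqrt{-1},\sqrt{-1}\big)$, conjugation amounts to moving the base point, and the Fact asserts a universal lower bound on the maximal ``injectivity radius'' of the orbifold $\grave{\ }\Gamma\backslash\mathbb{H}^{2}$. The Zassenhaus step and the classification of discrete nilpotent subgroups of $\mathrm{PSL}(2,\mathbb{R})$ as elementary (finite elliptic, hyperbolic cyclic, or parabolic cyclic) are also sound.

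The gap is in the endgame, and it is genuine. First, the claim that ``elements $\gamma\in\grave{\ }\Gamma\setminus U$ are already bounded below in norm by some $r_2>0$ depending only on $U$'' is false: $\|\cdot\|$ is not a distance to the identity but to the maximal compact subgroup, so $\{g\mid\|g\|<r_2\}$ is a neighborhood of all of $\grave{\ }K$. An elliptic element of order $2$ whose fixed point is at distance $\delta$ from $\sqrt{-1}$ has norm exactly $2\delta$, yet it lies near the rotation by $\pi$, hence outside any fixed Zassenhaus neighborhood $U$ once $\delta$ is small; so no such $r_2$ exists. Torsion is precisely the delicate case, and precisely the case this paper needs, since Setting~\ref{setting} explicitly allows $\Delta$ to have torsion and both Theorem~\ref{non-standard} and Lemma~\ref{n-standard} apply the Fact to such groups. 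Second, you do flag the resulting ``coordination'' problem, but the repair you offer is circular: the assertion that the $r$-thin part consists of standard pieces \emph{and the $r$-thick part is nonempty}, uniformly over all discrete subgroups, \emph{is} Fact~\ref{Kazhdan-Margulis} --- you cannot quote it as an input. To close the argument non-circularly you would need two further steps: (i) upgrade the Zassenhaus lemma, which concerns a neighborhood of the identity, to the displacement form of the Margulis lemma, i.e., that the subgroup generated by \emph{all} elements with displacement $<\epsilon$ at a point $p$ --- including large-angle elliptics fixing points near $p$ --- is elementary; and (ii) prove nonemptiness of the thick part, e.g., by noting that if every point of $\mathbb{H}^{2}$ were $\epsilon$-thin, connectedness would force all short elements to lie in a single maximal elementary subgroup $E$, whereas every elementary subgroup moves points far from its fixed point, axis, or horoball by arbitrarily large amounts --- a contradiction. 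As written, the proof fails at the false lower bound and then silently assumes the theorem it is proving.
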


In the following, we use the upper half plane model
$\big\{z=x+\sqrt{-1}y\in\mathbb{C}\mid \operatorname{Im}z>0\big\}$
equipped with the metric tensor ${\rm d}s^2=\big({\rm d}x^2+{\rm d}y^2\big)/y^2$
for the hyperbolic space~$\mathbb{H}^2$.
Then $\|\grave{\ }g\|$ is equal to the hyperbolic distance
$d_{\mathbb{H}^{2}}\big(\grave{\ }g\sqrt{-1},\sqrt{-1}\big)$
for $\grave{\ }g\in\mathrm{AdS}^{3}\cong\grave{\ }G$ (see, e.g., \cite[equation~(A.1)]{GuKa17}).

\begin{proof}[Proof of Theorem~\ref{non-standard}]
The idea of the proof is similar to \cite[Theorem~9.9]{KaKob16},
however, we give a proof for the sake of completeness.
By Fact~\ref{Kazhdan-Margulis},
replacing $j$ by some conjugate under $\grave{\ }G$,
we may assume $\|j(\gamma)\|\geq r$ for any
$\gamma\in\Delta\setminus\{\grave{\ }E\}$. In~particular, $\Gamma\cap K=\{E\}$ for such $j$ and for any $\rho$. We~fix $\delta>0$ such that
\begin{gather*}
\alpha := C_{\rm Lip}(j,\rho) + \delta<1.
\end{gather*}
Then, replacing $\rho$ by some conjugate under $\grave{\ }G$,
we may assume
\begin{gather}
\label{rho-est}
\|\rho(\gamma)\| \leq \alpha \|j(\gamma)\|\qquad
\text{for any} \quad\gamma\in\Delta.
\end{gather}
Indeed, by Definition~\ref{def-lip},
there exists a $(j,\rho)$-equivariant map
$f_{\delta}\colon\mathbb{H}^2\rightarrow\mathbb{H}^2$
satisfying $\operatorname{Lip}(f_{\delta})$ $< \alpha$. We~take $g_{\delta}\in\grave{\ }G$ such that $g_{\delta}\sqrt{-1}=f_{\delta}\big(\sqrt{-1}\big)$.
Then, for any $\gamma\in\Delta$, we have
\begin{gather*}
\big\|g_{\delta}^{-1}\rho(\gamma)g_{\delta}\big\| =
d_{\mathbb{H}^{2}}\big(f_{\delta}(\sqrt{-1}), \rho(\gamma)f_{\delta}\big(\sqrt{-1}\big)\big)
< \alpha d_{\mathbb{H}^{2}}\big(\sqrt{-1},j(\gamma)\sqrt{-1}\big)
=\alpha \|j(\gamma)\|.
\end{gather*}
Hence (\ref{rho-est}) holds by replacing $\rho$ with $g_{\delta}^{-1}\rho(\cdot)g_{\delta}$,
and therefore we get
\begin{gather*}
N_{\Gamma}(x,R) \leq c{\rm e}^{8R(1-(C_{\rm Lip}(j,\rho)+\delta))^{-1}}
\end{gather*}
by Fact~\ref{alphacounting}. Then the constant $\varepsilon_{\Gamma}$
in (\ref{varepsilon}) has the following lower bound:
\begin{gather*}
3\varepsilon_{\Gamma}
=\inf_{\gamma\in\Delta\setminus\{\grave{\ }E\}}
\left|\|j(\gamma)\| - \|\rho(\gamma)\|\right|
\geq \inf_{\gamma\in\Delta\setminus\{\grave{\ }E\}}(1-\alpha)
\|j(\gamma)\| \geq r(1-\alpha).
\end{gather*}
Note that
$\log\cosh t=O\big(t^{2}\big)$ as $t\to0$.
By the explicit description (\ref{Poincare_estimate})
of $m_{\Gamma}(k)$,
Theorem~\ref{non-standard} follows from Proposition~\ref{mainthm}.
\end{proof}

\subsection[Proof of Theorem~1.4 for Gamma of type (ii)]
{Proof of Theorem~\ref{mthm2} for $\boldsymbol\Gamma$ of type ($\boldsymbol {ii}$)}

In this subsection, we prove Theorem~\ref{mthm2} for the case
where $\Gamma$ is standard.
For this, we use the following fact by Kobayashi~\cite{Kob98} and Kassel~\cite{Ka12}
applied to our $\mathrm{AdS}^{3}$ setting, which gives the stability for properness
under any small deformation of standard convex cocompact discontinuous groups.
\begin{Fact}[{\cite[Theorem~1.4]{Ka12}}]
\label{deform}
Let $\Gamma$ be a convex cocompact discrete subgroup of $\grave{\ }G \times \grave{\ }K$.
Then for any $\alpha,\beta>0$,
there exists a neighborhood $W\subset\operatorname{Hom}(\Gamma,G)$
of the natural inclusion $\Gamma \subset G$ such that
for any $\varphi\in W$,
\begin{align*}
\left|\mu(\varphi(\gamma))-\mu(\gamma)\right|\leq
\begin{cases}
\alpha\left|\mu(\gamma)\right|&\text{if}\quad\gamma \in \Gamma\setminus K,\\
\beta&\text{if}\quad\gamma \in \Gamma\cap K,
\end{cases}
\end{align*}
where $\mu(g_{1},g_{2}):=(\|g_{1}\|,\|g_{2}\|)\in\mathbb{R}^{2}$ for $(g_{1},g_{2})\in G$,
$\|\cdot\|$ is given in Definition~$\ref{def-pball}$,
and $\left|(x_1,x_2)\right|:=\sqrt{x_{1}^{2} + x_{2}^{2}}$ for $(x_{1},x_{2})\in\mathbb{R}^2$.
\end{Fact}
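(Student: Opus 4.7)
My plan is to reduce the statement to the (quantitative) deformation theory of convex cocompact representations into the rank-one group $\grave{\ }G=\mathrm{PSL}(2,\mathbb{R})$. By Definition~\ref{convex_cocompact}, and since $\Gamma\subset\grave{\ }G\times\grave{\ }K$ rules out switching the two factors, after passing to a finite-index subgroup I may write $\Gamma=\Delta^{j,\rho}$ with $j(\Delta)$ convex cocompact in $\grave{\ }G$ and $\rho\colon\Delta\to\grave{\ }K$. Because $\rho(\Delta)$ is discrete in the compact group $\grave{\ }K$ it is finite, and because $\grave{\ }K=\mathrm{PSO}(2)$ fixes the basepoint $\sqrt{-1}\in\mathbb{H}^{2}$, one has $\|\rho(\gamma)\|=0$ for every $\gamma\in\Delta$, so $\mu(\gamma)=(\|j(\gamma)\|,0)$ and $|\mu(\gamma)|=\|j(\gamma)\|$. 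Consequently $\Gamma\cap K=j^{-1}(\grave{\ }K)$ is finite, and the bound $|\mu(\varphi(\gamma))-\mu(\gamma)|\leq\beta$ on this finite set follows at once from the continuity of $\varphi\mapsto\mu(\varphi(\gamma))$ in the compact-open topology.

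For $\gamma\in\Gamma\setminus K$, write a deformation as $\varphi(\gamma)=(j'(\gamma),\rho'(\gamma))$ and split
\[
|\mu(\varphi(\gamma))-\mu(\gamma)|^{2}=\bigl(\|j'(\gamma)\|-\|j(\gamma)\|\bigr)^{2}+\|\rho'(\gamma)\|^{2}.
\]
For the $\rho'$-summand, I would exploit that $\rho$ has finite image fixing $\sqrt{-1}$: the constant map $\mathbb{H}^{2}\to\{\sqrt{-1}\}$ is $(j,\rho)$-equivariant with Lipschitz constant $0$, so $C_{\mathrm{Lip}}(j,\rho)=0$. By the continuity of $C_{\mathrm{Lip}}$ on the convex cocompact locus (\cite[Proposition~1.5]{GuKa17}), one can shrink $W$ so that $C_{\mathrm{Lip}}(j',\rho')<\alpha/4$ together with $d_{\mathbb{H}^{2}}(\sqrt{-1},f'(\sqrt{-1}))$ arbitrarily small for a well-chosen $(j',\rho')$-equivariant map $f'$, which yields $\|\rho'(\gamma)\|\leq(\alpha/4)\|j'(\gamma)\|+c$ with $c$ as small as desired.

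For the $j'$-summand, I would invoke the quantitative Ehresmann--Thurston stability of convex cocompactness in rank one: after shrinking $W$ further, $j'$ remains convex cocompact, and for any $\alpha''>0$ one has $(1-\alpha'')\|j(\gamma)\|\leq\|j'(\gamma)\|\leq(1+\alpha'')\|j(\gamma)\|$ uniformly over all $\gamma$ with $\|j(\gamma)\|$ bounded below by a fixed positive constant. The finitely many remaining $\gamma\in\Delta$ with $j(\gamma)\notin\grave{\ }K$ but $\|j(\gamma)\|$ small, together with the additive constant $c$ above, can be absorbed by applying the pointwise continuity of $\mu(\varphi(\cdot))$ on this enlarged finite set. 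Putting everything together yields $|\mu(\varphi(\gamma))-\mu(\gamma)|\leq\alpha\|j(\gamma)\|$ on $\Gamma\setminus K$. The main technical obstacle is the uniform multiplicative comparison $\|j'(\gamma)\|\approx\|j(\gamma)\|$ valid for all $\gamma$ simultaneously; it rests on the continuous variation of the limit set and its convex hull under deformation, through which the quasi-isometric identification of word length with translation length transports from $j$ to nearby representations.
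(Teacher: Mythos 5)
First, a point of reference: the paper does not prove this statement at all --- it is quoted as a Fact from Kassel \cite[Theorem~1.4]{Ka12} --- so there is no in-paper argument to compare yours against. Your overall strategy (reduce to a graph $\Delta^{j,\rho}$ with $\rho(\Delta)\subset\grave{\ }K$, note that the second coordinate of $\mu$ vanishes on $\Gamma$ since $\|\cdot\|$ is zero on $\grave{\ }K$, control $\|\rho'(\gamma)\|$ by a $(j',\rho')$-equivariant Lipschitz map with small constant, and control $\|j'(\gamma)\|-\|j(\gamma)\|$ by stability of convex cocompactness) is indeed the standard rank-one route, in the spirit of Kassel's thesis and of Gu\'eritaud--Kassel.

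That said, there are concrete gaps. First, the step you yourself flag as ``the main technical obstacle'' is essentially the whole content of the theorem, and your justification does not close it: continuity of the limit set and its convex hull yields quasi-isometry constants that are merely \emph{bounded}, not constants tending to $1$, so it cannot by itself produce $(1-\alpha'')\|j(\gamma)\|\le\|j'(\gamma)\|\le(1+\alpha'')\|j(\gamma)\|$. The natural tool --- which you already invoke for the $\rho'$-summand but not here --- is again \cite[Proposition~1.5]{GuKa17}: since $C_{\rm Lip}(j,j)=1$ for non-elementary convex cocompact $j$, continuity gives $C_{\rm Lip}(j,j')<1+\alpha''$ and $C_{\rm Lip}(j',j)<1+\alpha''$ for $j'$ near $j$, whence $\|j'(\gamma)\|\le(1+\alpha'')\|j(\gamma)\|+c$ and symmetrically. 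Second, every one of your estimates carries an additive constant $c$ (the basepoint displacement $d_{\mathbb{H}^{2}}\big(\sqrt{-1},f'\big(\sqrt{-1}\big)\big)$ of the chosen equivariant maps), and the radius below which you fall back on ``pointwise continuity on a finite set'' depends on $c$; you therefore need these constants bounded \emph{uniformly over} $\varphi\in W$, which you assert but do not argue. Third, passing to a finite-index subgroup $\Gamma'=\Delta^{j,\rho}$ only proves the estimate for $\gamma\in\Gamma'$; transporting it back to all of $\Gamma$ (via finitely many coset representatives and the triangle inequality for $\|\cdot\|=d_{\mathbb{H}^{2}}\big(\cdot\sqrt{-1},\sqrt{-1}\big)$) introduces yet another additive error that must be absorbed by the same mechanism. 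Finally, a minor slip: $\rho(\Delta)$ need not be discrete in $\grave{\ }K$ (only the graph $\Delta^{j,\rho}$ is discrete), so it need not be finite; fortunately you only need $\rho(\Delta)\subset\grave{\ }K$, which holds by hypothesis.
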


We introduce the following terminology for the estimate of the discrete spectrum
since a~discontinuous group $\Gamma$ is not necessarily torsion-free.
Let $\operatorname{pr}_{j}\colon G=\grave{\ }G\times\grave{\ }G
\rightarrow \grave{\ }G$
be the $j$-th projection ($j=1,2$). In~the following definition, we assume that $\operatorname{pr}_{2}(\Gamma)$ is bounded.
Then the group $\Gamma_1:=\mathrm{ker}(\operatorname{pr}_{1}|_{\Gamma})$ is cyclic
since $\Gamma_1$ is a discrete subgroup of a conjugate of the product group
$\{\grave{\ }E\}\times\grave{\ }K$ $(\cong \mathbb{R}/\mathbb{Z})$.

\begin{Definition}
\label{classn}
A discrete subgroup $\Gamma$ of $G$ is said to be standard of class $n$
if $\operatorname{pr}_{2}(\Gamma)$ is bounded and
the cyclic group $\Gamma_1=\mathrm{ker}(\operatorname{pr}_{1}|_{\Gamma})$
is of order $n$.
\end{Definition}

{\samepage\begin{Remark}\quad
\begin{enumerate}\itemsep=0pt
\item[(1)]
If $\Gamma$ is torsion-free, then it is of class $1$.
\item[(2)]
If $\operatorname{pr}_{2}(\Gamma)$ is bounded
for a discrete subgroup $\Gamma$ of $G$, then
the group $\operatorname{pr}_{1}(\Gamma)$ is discrete
in~$\grave{\ }G$.
Moreover, if $\Gamma$ is of class $1$, then it is of the form $\Delta^{j,\rho}$
such that $\Delta = \operatorname{pr}_{1}(\Gamma)$ and~$C_{\rm Lip}(j,\rho)=0$.
\end{enumerate}
\end{Remark}

}

Let $r>0$ be the constant in Fact~\ref{Kazhdan-Margulis}.
For an integer $n \geq 2$,
we define a positive num\-ber~$\eta_{n}$~by
\begin{gather*}
\cosh \eta_{n} :=1+2\bigg(\sinh \frac{r}{4} \sin \frac{\pi}{n}\bigg)^{2}.
\end{gather*}
We get the following by easy computations:
\begin{Lemma}
\label{computation}
By an abuse of notation, we regard $k(\theta)$, $a(t)$ in \eqref{k-a}
as elements of $\grave{\ }G=\mathrm{PSL}(2,\mathbb{R})$. Then
\begin{gather*}
\bigg\|a\bigg(\frac{r}{8}\bigg)^{-1}k\bigg(\frac{j\pi}{n}\bigg)a\bigg(\frac{r}{8}\bigg)\bigg\| \geq \eta_{n}\qquad
\text{for}\quad j=1,\ldots,n-1.
\end{gather*}
\end{Lemma}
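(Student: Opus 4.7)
The plan is to translate the claim into an explicit $2\times 2$ matrix computation via Definition~\ref{def-pball}. Given any $\grave{\ }g\in\grave{\ }G$ with $\mathrm{SL}(2,\mathbb{R})$-matrix entries $(g_{ij})$, the isomorphism \eqref{isom} identifies $\grave{\ }g$ with $x=(x_{1},x_{2},x_{3},x_{4})\in\mathbb{H}^{2,1}/\{\pm1\}$. A short algebraic check gives $g_{11}^{2}+g_{22}^{2}=2(x_{1}^{2}+x_{4}^{2})$ and $g_{12}^{2}+g_{21}^{2}=2(x_{2}^{2}+x_{3}^{2})$, hence by Definition~\ref{def-pball},
\[
\cosh\|\grave{\ }g\|=\tfrac{1}{2}\sum_{i,j}g_{ij}^{2},
\]
and the right-hand side is insensitive to the sign of the chosen lift, so it is well defined on $\grave{\ }G=\mathrm{PSL}(2,\mathbb{R})$.

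Next I would apply this formula to $g:=a(r/8)^{-1}k(j\pi/n)a(r/8)$. A direct matrix multiplication using \eqref{k-a} yields
\[
g=\begin{pmatrix}\cos(j\pi/n) & -e^{-r/4}\sin(j\pi/n)\\ e^{r/4}\sin(j\pi/n) & \cos(j\pi/n)\end{pmatrix},
\]
so summing squares of entries and using $\cosh(r/2)-1=2\sinh^{2}(r/4)$ produces
\[
\cosh\|g\|=\cos^{2}(j\pi/n)+\cosh(r/2)\sin^{2}(j\pi/n)=1+2\sinh^{2}(r/4)\sin^{2}(j\pi/n).
\]
For $j\in\{1,\ldots,n-1\}$, the angle $j\pi/n$ lies in the interval $[\pi/n,(n-1)\pi/n]$, on which $\sin$ is bounded below by $\sin(\pi/n)$ (the minimum is attained at the two endpoints by symmetry about $\pi/2$). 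Comparing with the definition of $\eta_{n}$ yields $\cosh\|g\|\geq\cosh\eta_{n}$, and the lemma follows.

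No real obstacle arises here: the argument is a routine computation once one identifies the pseudo-distance $\|\cdot\|$ with (half of) the Frobenius-norm squared via \eqref{isom}. The only mildly delicate point is keeping track of the factor of $2$ in passing between the coordinate description of Definition~\ref{def-pball} and the $\mathrm{SL}(2,\mathbb{R})$-matrix picture, and noting that the final elementary inequality $\sin(j\pi/n)\geq\sin(\pi/n)$ for $j=1,\ldots,n-1$ is exactly what is needed to match the somewhat asymmetric definition of $\eta_{n}$.
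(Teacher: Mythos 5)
Your computation is correct and is exactly the "easy computation" the paper alludes to (the lemma is stated without proof there): the identity $\cosh\|\grave{\ }g\|=\tfrac12\sum_{i,j}g_{ij}^2$, the conjugated matrix, the identity $\cosh(r/2)-1=2\sinh^2(r/4)$, and the bound $\sin(j\pi/n)\geq\sin(\pi/n)$ all check out. Nothing is missing.
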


We give a uniform estimate of $\varepsilon_{\Gamma}$ in (\ref{varepsilon})
and $N_{\Gamma}(x,R)$ in (\ref{orbit-count})
for standard discrete subgroups $\Gamma$ of class $n$
after taking a conjugation of $\Gamma$.

\begin{Lemma}
\label{n-standard}
Let $\Gamma$ be a standard discrete subgroup of class $n \geq 2$.
There exists $g\in G$ such that
$\varepsilon_{g^{-1}\Gamma g}\geq\operatorname{min}\{\eta_n/3,r/6\}$
and $N_{g^{-1}\Gamma g}(x,R)<c{\rm e}^{16R}$ for any $x\in \mathrm{AdS}^{3}$ and any $R>0$.
\end{Lemma}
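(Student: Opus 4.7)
The plan is to conjugate $\Gamma$ in two stages so that the first coordinate of every non-identity element has pseudo-norm bounded below, while the second coordinate stays small but becomes strictly positive on $\Gamma_{1}\setminus\{E\}$. First apply Fact~\ref{Kazhdan-Margulis} to the discrete subgroup $\operatorname{pr}_{1}(\Gamma)\subset\grave{\ }G$ (discreteness is Remark~(2) after Definition~\ref{classn}) to produce $g_{1}\in\grave{\ }G$ with $\|g_{1}^{-1}\grave{\ }\gamma g_{1}\|\geq r$ for every $\grave{\ }\gamma\in g_{1}^{-1}\operatorname{pr}_{1}(\Gamma)g_{1}\setminus\{\grave{\ }E\}$. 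Simultaneously, since $\overline{\operatorname{pr}_{2}(\Gamma)}$ is a compact subgroup of $\operatorname{PSL}(2,\mathbb{R})$ and every such subgroup lies in a conjugate of the maximal compact $\grave{\ }K$, there is $g_{2}'\in\grave{\ }G$ with $(g_{2}')^{-1}\operatorname{pr}_{2}(\Gamma)g_{2}'\subset\grave{\ }K$; by uniqueness of the cyclic subgroup of order $n$ in $\grave{\ }K$, this carries $\operatorname{pr}_{2}(\Gamma_{1})$ onto $\{k(j\pi/n)\}_{j=0}^{n-1}$. The second stage conjugates the second factor further by $a(r/8)$, whose role, made precise by Lemma~\ref{computation}, is to separate these otherwise $\|\cdot\|$-invisible torsion elements from the identity. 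Set $g:=(g_{1},\,g_{2}'a(r/8))$ and $\tilde{\Gamma}:=g^{-1}\Gamma g$.

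To verify the lower bound on $\varepsilon_{\tilde{\Gamma}}$, I would split $\tilde{\Gamma}\setminus\{E\}$ into two cases. For $\tilde{\gamma}\in\tilde{\Gamma}_{1}\setminus\{E\}$, one has $\tilde{\gamma}_{1}=\grave{\ }E$ and $\tilde{\gamma}_{2}=a(r/8)^{-1}k(j\pi/n)a(r/8)$ for some $j\in\{1,\ldots,n-1\}$, so Lemma~\ref{computation} yields $\|\tilde{\gamma}_{2}\|\geq\eta_{n}$ and hence $\tfrac{1}{3}\bigl|\|\tilde{\gamma}_{1}\|-\|\tilde{\gamma}_{2}\|\bigr|\geq\eta_{n}/3$. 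For $\tilde{\gamma}\notin\tilde{\Gamma}_{1}$, the condition $\tilde{\gamma}_{1}\neq\grave{\ }E$ gives $\|\tilde{\gamma}_{1}\|\geq r$ from the first stage, while $\tilde{\gamma}_{2}=a(r/8)^{-1}k(\theta)a(r/8)$ for some $\theta\in\mathbb{R}$; an explicit matrix computation using the identity $\cosh\|x\|=(a^{2}+b^{2}+c^{2}+d^{2})/2$ for $x=\left(\begin{smallmatrix}a&b\\c&d\end{smallmatrix}\right)\in\mathrm{SL}(2,\mathbb{R})$ (which follows from the isomorphism~\eqref{isom}) gives $\cosh\|\tilde{\gamma}_{2}\|=1+(\cosh(r/2)-1)\sin^{2}\theta\leq\cosh(r/2)$, so $\|\tilde{\gamma}_{2}\|\leq r/2$ and $\tfrac{1}{3}\bigl|\|\tilde{\gamma}_{1}\|-\|\tilde{\gamma}_{2}\|\bigr|\geq(r-r/2)/3=r/6$. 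Combining, $\varepsilon_{\tilde{\Gamma}}\geq\min\{\eta_{n}/3,\,r/6\}$.

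For the counting bound I would invoke Fact~\ref{alphacounting} with $\alpha=1/2$: the same case analysis gives $\|\tilde{\gamma}_{2}\|\leq r/2\leq\tfrac{1}{2}\|\tilde{\gamma}_{1}\|$ whenever $\tilde{\gamma}_{1}\neq\grave{\ }E$, and $\|\tilde{\gamma}_{1}\|=0\leq\tfrac{1}{2}\|\tilde{\gamma}_{2}\|$ otherwise, so the hypothesis is satisfied. Moreover $\tilde{\Gamma}\cap K=\{E\}$: if $\tilde{\gamma}_{2}\in\grave{\ }K$, the off-diagonal entries $\pm e^{\pm r/4}\sin\theta$ of $a(r/8)^{-1}k(\theta)a(r/8)$ force $\sin\theta=0$, so $\tilde{\gamma}_{2}=\grave{\ }E$; and $\tilde{\gamma}_{1}\in\grave{\ }K$ forces $\|\tilde{\gamma}_{1}\|=0$, incompatible with $\|\tilde{\gamma}_{1}\|\geq r$ unless $\tilde{\gamma}_{1}=\grave{\ }E$. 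Thus Fact~\ref{alphacounting} delivers $N_{\tilde{\Gamma}}(x,R)<c\,e^{16R}$ with essentially the same universal constant $c$. The main difficulty is really bookkeeping: one single $a(r/8)$-twist must simultaneously produce the lower bound $\eta_{n}$ on the elements coming from $\Gamma_{1}$ (supplied by Lemma~\ref{computation}) and the upper bound $r/2$ on the generic second coordinates, and the choice of exponent $r/8$ is tuned precisely so that both inequalities coexist with the Kazhdan--Margulis constant $r$ fixed by Fact~\ref{Kazhdan-Margulis}.
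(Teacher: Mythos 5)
Your proposal is correct and follows essentially the same route as the paper: conjugate the first factor via Fact~\ref{Kazhdan-Margulis}, put the second factor into $\grave{\ }K$ and twist by $a(r/8)$, use Lemma~\ref{computation} for the $\Gamma_1$-elements and the bound $\|a(r/8)^{-1}k(\theta)a(r/8)\|\leq r/2$ for the rest, then apply Fact~\ref{alphacounting} with $\alpha=1/2$. The only (harmless) deviations are that you make the conjugation into $\grave{\ }K$ explicit and verify the $r/2$ bound by direct matrix computation instead of the triangle inequality $\|g_1g_2\|\leq\|g_1\|+\|g_2\|$.
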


\begin{proof}
Let $\Gamma_1=\mathrm{ker}(\operatorname{pr}_{1}|_{\Gamma})$
as in Definition~\ref{classn}.
Since $\Gamma$ is of class $n$,
the group $\operatorname{pr}_{2}(\Gamma_{1})$ is generated by
$k(\pi/n)\in \grave{\ }G=\mathrm{PSL}(2,\mathbb{R})$. We~take $\grave{\ }g \in \grave{\ }G$ in Fact~\ref{Kazhdan-Margulis} applied
to $\grave{\ }\Gamma=\operatorname{pr}_{1}(\Gamma)$ and
set $g:=(\grave{\ }g,a(r/8))\in G$.
Replacing $\Gamma$ by $g^{-1}\Gamma g$, we get
$\|\gamma_{1}\|\geq r$ for
$(\gamma_{1},\gamma_{2})\in\Gamma\setminus\Gamma_{1}$
by Fact~\ref{Kazhdan-Margulis} and
$\|\gamma_{2}\|\geq \eta_n$ for $(\gamma_1,\gamma_2)\in\Gamma_1\setminus\{E\}$
by Lemma~\ref{computation}.
Moreover, if $(\gamma_{1},\gamma_{2})\in\Gamma$, then
$\|\gamma_{2}\|=\|a(r/8)^{-1}ka(r/8)\|$ for some $k\in\grave{\ }K$,
hence $\|\gamma_{2}\|\leq r/2$ because
$\|g_{1}g_{2}\|\leq\|g_{1}\|+\|g_{2}\|$ for $g_{1},g_{2} \in \grave{\ }G$
and since
$\|a(t)\|=2t$ for $t\geq 0$ and $\|k\|=0$ for $k\in\grave{\ }K$.
To summarize,
\begin{gather*}
\begin{cases}
\|\gamma_{2}\|\leq \frac{r}{2}\leq\frac{\|\gamma_{1}\|}{2} &
\text{if}\quad (\gamma_{1},\gamma_{2})\in\Gamma\setminus\Gamma_{1},
\\[1ex]
\|\gamma_{2}\|\geq \eta_n &
\text{if}\quad(\gamma_{1},\gamma_{2})\in\Gamma_{1}\setminus\{E\}.
\end{cases}
\end{gather*}
Then $\varepsilon_{\Gamma}\geq\operatorname{min}\{\eta_n/3,r/6\}$
and $\Gamma\cap K=\{E\}$.
Moreover, $\|\gamma_{1}\|\leq\|\gamma_{2}\|/2$ or
$\|\gamma_{2}\|\leq\|\gamma_{1}\|/2$ for any $(\gamma_{1},\gamma_{2})\in\Gamma$
and thus $N_{\Gamma}(x,R)<c{\rm e}^{16R}$
for~any~$x\in \mathrm{AdS}^{3}$ and any $R>0$
by Fact~\ref{alphacounting}.
\end{proof}

\begin{Theorem}
\label{standard}
There exists a constant $\mu_{n}>0$ depending only on $n$ such that
for any convex cocompact standard discrete subgroup $\Gamma$ of class $n$
and any $m,k\in\mathbb{N}$ with $m>3^k\mu_{n}$,
\begin{gather*}
\widetilde{\mathcal{N}}_{\Gamma\backslash\mathrm{AdS}^{3}}(\lambda_{m})\geq k.
\end{gather*}
\end{Theorem}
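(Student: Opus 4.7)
The plan is to combine the structural estimates of Lemma~\ref{n-standard} with the deformation-stability result Fact~\ref{deform} to produce a single neighborhood $W \subset \mathrm{Hom}(\Gamma, G)$ of the natural inclusion $\Gamma \subset G$ over which Proposition~\ref{mainthm} can be applied with uniform constants depending only on $n$. By conjugation invariance of $\widetilde{\mathcal{N}}$, I would first normalize $\Gamma$ so that the estimates of Lemma~\ref{n-standard} are in force; the modest bookkeeping required to reconcile this with the hypothesis of Fact~\ref{deform} --- that $\Gamma$ sits inside $\grave{\ }G \times \grave{\ }K$, a condition upset by the $a(r/8)$-twist in the proof of Lemma~\ref{n-standard} --- can be handled by applying Fact~\ref{deform} on the $\grave{\ }G \times \grave{\ }K$ side before performing the twist and then transporting the neighborhood through it.

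For $\alpha,\beta>0$ to be chosen small, Fact~\ref{deform} yields a neighborhood $W$ such that every $\varphi \in W$ satisfies $|\mu(\varphi(\gamma)) - \mu(\gamma)| \leq \alpha|\mu(\gamma)|$ for $\gamma \in \Gamma \setminus \Gamma_1$ and $|\mu(\varphi(\gamma)) - \mu(\gamma)| \leq \beta$ for $\gamma \in \Gamma_1$. Combined with the two regimes of Lemma~\ref{n-standard} --- namely $\|\gamma_2\| \leq \|\gamma_1\|/2$ with $\|\gamma_1\| \geq r$ for $\gamma \in \Gamma \setminus \Gamma_1$, and $\|\gamma_1\|=0$, $\|\gamma_2\| \geq \eta_n$ for $\gamma \in \Gamma_1 \setminus \{E\}$ --- a short computation shows that taking $\alpha < 1/(2\sqrt{5})$ and $\beta < \eta_n/4$ yields a uniform $\alpha' < 1$ with $\min\{\|\varphi(\gamma)_1\|, \|\varphi(\gamma)_2\|\} \leq \alpha'\max\{\|\varphi(\gamma)_1\|, \|\varphi(\gamma)_2\|\}$ throughout $\varphi(\Gamma) \setminus \{E\}$, together with a uniform lower bound $\varepsilon_{\varphi(\Gamma)} \geq \varepsilon_0 > 0$ depending only on $n$. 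For these parameters one also has $\varphi(\Gamma) \cap K = \{E\}$, so Fact~\ref{alphacounting} yields $N_{\varphi(\Gamma)}(x,R) \leq c\,{\rm e}^{8R/(1-\alpha')}$ uniformly in $\varphi \in W$.

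With both $\varepsilon_{\varphi(\Gamma)}$ and the exponential growth constants controlled by quantities depending only on $n$, the explicit formula (\ref{Poincare_estimate}) gives $m_{\varphi(\Gamma)}(k) \leq 3^k \mu_n$ for some $\mu_n$ independent of $\varphi \in W$, and Proposition~\ref{mainthm} then supplies at least $k$ linearly independent elements of $L^{2}_{\lambda_m}(\varphi(\Gamma)\backslash \mathrm{AdS}^{3})$ for every integer $m > 3^k \mu_n$ and every $\varphi \in W$, whence $\widetilde{\mathcal{N}}_{\Gamma \backslash \mathrm{AdS}^{3}}(\lambda_m) \geq k$. The hard part will be the simultaneous control in the middle step: keeping both the ratio $\min/\max$ of the two components and the separation $|\|\varphi(\gamma)_1\| - \|\varphi(\gamma)_2\||$ bounded away from $1$ and $0$ respectively, uniformly over $\varphi \in W$ and $\gamma \in \Gamma \setminus \{E\}$ --- especially for torsion elements, where Fact~\ref{deform} furnishes only the weaker absolute $\beta$-control, and where it is precisely the $\eta_n$-gap from Lemma~\ref{computation} that rescues the estimate.
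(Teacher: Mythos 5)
Your overall architecture --- deform via Fact~\ref{deform}, control $\varepsilon_{\varphi(\Gamma)}$ and $N_{\varphi(\Gamma)}(x,R)$ uniformly over a neighborhood $W$, and feed the resulting constants into \eqref{Poincare_estimate} and Proposition~\ref{mainthm} --- matches the paper, and your handling of the elements $\gamma\in\Gamma\setminus\Gamma_1$ is sound. The gap is in the torsion part. For $\gamma\in\Gamma_1\setminus\{E\}$ the only information Fact~\ref{deform} gives (applied before the twist, as you propose, where $\Gamma_1\subset K$) is $|\mu(\varphi(\gamma))|\leq\beta$, i.e., both components of $\varphi(\gamma)$ move the basepoint $\sqrt{-1}$ by at most $\beta$. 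This does \emph{not} tell you that $\varphi_1(\gamma)=\grave{\ }E$, nor that $\varphi_2(\gamma)$ is a rotation of angle $j\pi/n$ about a point near $\sqrt{-1}$: a priori $\varphi_1(\gamma)$ could be a nontrivial elliptic element of order dividing $n$ fixing a point near $\sqrt{-1}$ (such elements have $\|\cdot\|$ as small as one likes), while $\varphi_2(\gamma)$ could even be trivial. In that scenario, after transporting through the twist $(\grave{\ }g,a(r/8))$, the quantity $\|\grave{\ }g^{-1}\varphi_1(\gamma)\grave{\ }g\|$ is uncontrolled --- it depends on $d_{\mathbb{H}^{2}}\big(\grave{\ }g\sqrt{-1},\operatorname{Fix}(\varphi_1(\gamma))\big)$, and $\|\grave{\ }g\|$ from Fact~\ref{Kazhdan-Margulis} can be arbitrarily large --- so neither the separation $\big|\|\varphi_1(\gamma)\|-\|\varphi_2(\gamma)\|\big|\geq\varepsilon_0$ nor the ratio bound needed for Fact~\ref{alphacounting} follows. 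Lemma~\ref{computation} cannot ``rescue'' the estimate: it concerns only the exact elements $a(r/8)^{-1}k(j\pi/n)a(r/8)$, and invoking it for $\varphi(\Gamma_1)$ presupposes exactly the structural fact you have not proved, namely that $\varphi_1(\Gamma_1)$ is trivial and $\varphi_2|_{\Gamma_1}$ is injective into a maximal compact whose fixed point is near $\sqrt{-1}$.

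That structural fact is the actual content of the paper's proof: one shows that every $\varphi\in W$ sends $\Gamma$ to a standard discrete subgroup of class $n$ again. Injectivity of $\varphi|_{\Gamma_1}$ is an open condition, and $\varphi_1(\Gamma_1)=\{\grave{\ }E\}$ is forced by a normalizer argument --- if $\varphi_1(\Gamma_1)$ were nontrivial then, since $\Gamma_1$ is normal in $\Gamma$, the whole group $\varphi(\Gamma)$ would lie in $\grave{\ }K_1\times\grave{\ }G$, and the estimates \eqref{estimate} would force it to be finite, contradicting non-elementarity. (Alternatively: a nontrivial element of $\mathrm{PSL}(2,\mathbb{R})$ of order dividing $n$ satisfies $\operatorname{tr}^2\leq4\cos^2(\pi/n)<4$, hence cannot be close to $\grave{\ }E$, so $\varphi_1(\gamma)=\grave{\ }E$ for $\varphi$ near the inclusion.) Once $\varphi(\Gamma)$ is known to be standard of class $n$, the paper applies Lemma~\ref{n-standard} \emph{to the deformed group} with a fresh, $\varphi$-dependent conjugation; since the constants there depend only on $n$, uniformity over $W$ is automatic and no transport of estimates through a fixed twist is required. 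To repair your argument, add the triviality of $\varphi_1(\Gamma_1)$ (by either route) and then either re-normalize as in the paper or prove a perturbed version of Lemma~\ref{computation} for rotations of angle $j\pi/n$ about points within $O(\beta)$ of $\sqrt{-1}$.
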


\begin{proof}
{\samepage If $n=1$, then this follows from Theorem~\ref{non-standard}
since convex cocompact discontinuous groups are finitely generated,
hence we assume that $n\geq 2$. In~this case, we shall prove that $\Gamma$ and its small deformation
are standard of class $n$. When $n\geq2$,
the group $\Gamma_{1}=\mathrm{ker}(\operatorname{pr}_{1}|_{\Gamma})$
is a~cyclic group of order $n$.
By Fact~\ref{Kazhdan-Margulis},
replacing $\Gamma$ by some conjugate under $\grave{\ }G\times\{\grave{\ }E\}$,
we may and do assume $\|\gamma_1\|\geq r$
for any $(\gamma_1,\gamma_2)\in\Gamma\setminus\Gamma_1$.
By Fact~\ref{deform}, there exists a neighborhood~$W$ of~the natural inclusion
$\Gamma \subset G$ such that for any $\varphi\in W$,
the restriction of $\varphi$ to the finite subgroup~$\Gamma_{1}$ is injective and the inequalities
\begin{align}
\label{estimate}
\begin{cases}
\|\varphi_{1}(\gamma)\| \geq \frac{1}{2}r,\quad
\|\varphi_{2}(\gamma)\|\leq \frac{1}{2}\|\varphi_{1}(\gamma)\| &
\text{if}\quad\gamma \in \Gamma\setminus\Gamma_1,
\\[1ex]
\left|\mu(\varphi(\gamma))\right| < \frac{1}{2}r &
\text{if}\quad\gamma \in \Gamma_1
\end{cases}
\end{align}
hold where $\varphi_{i}=\operatorname{pr}_{i}\circ\varphi$ for $i=1,2$.
Then $\varphi$ is injective and discrete.

}

We claim $\varphi_1(\Gamma_1)$ is trivial.
Indeed, if there exists $\gamma \in \Gamma_1\setminus\{E\}$
such that $\varphi_1(\gamma) \neq \grave{\ }E$, then the normalizer of
$\varphi(\Gamma_1)$ in $G$ is contained in $\grave{\ }K_1\times \grave{\ }G$, where
$\grave{\ }K_1$ is the maximal compact subgroup of $\grave{\ }G$
containing $\varphi_1(\Gamma_1)$.
Hence $\varphi(\Gamma) \subset \grave{\ }K_1\times \grave{\ }G$.
By the inequalities (\ref{estimate}),
$\varphi(\Gamma)$ is finite, hence~$\Gamma$ is also finite.
This contradicts the assumption that $\Gamma$ is non-elementary.
Thus $\varphi_1(\Gamma_1)$ is trivial and
$\varphi_2(\Gamma_1)$ is non-trivial.
Hence the normalizer of $\varphi(\Gamma_{1})$ in $G$ is contained in
$\grave{\ }G\times \grave{\ }K_{2}$,
where~$\grave{\ }K_2$ is the maximal compact subgroup of $\grave{\ }G$
containing $\varphi_2(\Gamma_1)$.
Therefore $\operatorname{pr}_{2}(\varphi(\Gamma))$ is bounded.
Moreover
$\varphi(\Gamma)_{1}=\varphi(\Gamma_1)$ by the inequalities (\ref{estimate}),
hence the discrete subgroup $\varphi(\Gamma)$ is standard of class $n$.
By the explicit description (\ref{Poincare_estimate}) of $m_{\Gamma}(k)$
and Lemma~\ref{n-standard},
Theorem~\ref{standard} follows
from Proposition~\ref{mainthm}.
\end{proof}

\begin{Remark}
In the above proof, we have shown that a convex cocompact standard discrete subgroup
$\Gamma$ of class $n\geq 2$ and its small deformation are standard of class $n$.
Therefore we obtain a stronger result that
\begin{align*}
%\label{infinity}
\widetilde{\mathcal{N}}_{\Gamma\backslash\mathrm{AdS}^{3}}(\lambda_{m})=\infty
\end{align*}
for any convex cocompact standard discrete subgroup
$\Gamma$ of class $n\geq 2$ and any integer $m>3\mu_{n}$
if the following statement holds:
$\mathcal{N}_{\Gamma\backslash\mathrm{AdS}^{3}}(\lambda_{m})=\infty$
for any standard discrete subgroup $\Gamma$ and any $m\in\mathbb{N}$ such that
$\mathcal{N}_{\Gamma\backslash\mathrm{AdS}^{3}}(\lambda_{m})\geq1$.
The latter statement is discussed in~\cite{AnaKaKob} by using discretely
decomposable blanching laws of unitary representations (cf.~\cite{KaKob19}).
\end{Remark}

Thus the proof of Theorem~\ref{mthm2} is completed.

\subsection*{Acknowledgements}
The author would like to express his sincere gratitude to
Professor Toshiyuki Kobayashi whose suggestions
led him to study the multiplicities of $L^2$-eigenvalues
for anti-de Sitter manifolds.
He~also would like to show his appreciation to Dr.\ Hiroyoshi Tamori
whose comments led him to an explicit description of $m(C,a,\varepsilon, s)$
in Lemma~\ref{koukou}.
Thanks are also due to the anonymous referees
for their helpful comments to improve the paper.
This work was supported by JSPS KAKENHI Grant Number 18J20157
and the Program for Leading Graduate Schools, MEXT, Japan.

\pdfbookmark[1]{References}{ref}
\LastPageEnding

\end{document}